\documentclass{article}
\usepackage{amsmath}
\usepackage{amssymb}
\usepackage{mathrsfs}
\usepackage{amsthm}
\usepackage{paralist}
\usepackage{graphics} 
\usepackage{epsfig} 
\usepackage{graphicx}
\usepackage{epstopdf}
\usepackage{multirow}
\usepackage[colorlinks=true]{hyperref}
\hypersetup{urlcolor=blue, citecolor=red}

\textheight=8.2 true in
\textwidth=5.5 true in
\topmargin 30pt
\setcounter{page}{1}
\graphicspath{{fig/}}

\numberwithin{equation}{section}
\newtheorem{theorem}{Theorem}
\newtheorem{lemma}{Lemma}
\newtheorem{remark}{Remark}
\numberwithin{theorem}{section}
\numberwithin{lemma}{section}
\numberwithin{remark}{section}
\bibliographystyle{plain}

\title{Nitsche-XFEM for optimal control problems governed by elliptic PDEs with interfaces
\thanks
{
	This work was supported by  National Natural Science Foundation of China (11771312).
}}
\author{
	 \ Tao Wang  \thanks{Email: wangtao5233@hotmail.com }, Chaochao Yang \thanks{Email: yangchaochao9055@163.com}, \
	Xiaoping Xie \thanks{Corresponding author. Email: xpxie@scu.edu.cn} \\
	{School of Mathematics, Sichuan University, Chengdu 610064, China}
}

\begin{document}
	\maketitle

	\begin{abstract}
		For the optimal control problem governed by elliptic equations with interfaces, we present a numerical method based on the Hansbo’s Nitsche-XFEM $\cite{Hansbo2002An}$. We followed the Hinze’s variational discretization concept $\cite{Hinze05var}$ to discretize the continuous problem on a uniform mesh. We derive optimal error estimates of the state, co-state and control both in mesh dependent norm and $L^2$ norm. In addition, our method is suitable for the model with non-homogeneous interface condition. Numerical results confirmed our theoretical results, with the implementation details discussed.
	\end{abstract}	
	
	\section{Introduction}
	In material science or engineering design, when optimizing physical processing composed of several materials with different conductivities or densities, we will encounter optimization problems governed by PDEs with interfaces. We consider the following  linear quadratic optimal control problem: 
	\begin{equation}\label{eqobjective}
	\text{min}~ J(y, u):=\frac{1}{2}\int_\Omega (y-y_d)^2~dx+\frac{a}{2}\int_\Omega u^2~ds 
	\end{equation}
	for $ (y,u)\in H^1_0(\Omega)\times L^2 (\Omega) $ subject to the elliptic interface problem 
	\begin{equation}\label{eqstrongstate}
	\left\{
	\begin{array}{rll}
	& -\nabla\cdot(\alpha(x)\nabla y)=u, & \text{ in }\Omega \\
	& y=0, & \text{ on }\partial\Omega \\
	& [y]=0,[\alpha\nabla_n y ]=g, & \text{ on }\Gamma\\
	\end{array}
	\right.
	\end{equation}
	with the control constraint
	\begin{equation}\label{eqconstraint}
	u_a\leq u \leq u_b, \text{ a.e. in } \Omega.
	\end{equation}
	
	Where $\Omega$ is a bounded domain in $\mathbb{R}^2$ with convex polygonal boundary $\partial\Omega$, and an internal smooth interface $\Gamma$ dividing $\Omega$ into two open sets $\Omega_1$ and $\Omega_2$. $y_d\in L^2(\Omega)$ is the desired state to be achieved by controlling  $u$, and $a$ is a positive constant. $\alpha$ is a piecewise constant. $\alpha|_{\Omega_i}=\alpha_i>0$ for $i=1,2$.
	$[y]=y|_{\Omega_1}-y|_{\Omega_2}$ denotes the jump of the function $y$, $\textbf{n}$ is normal vector of $\Gamma$ pointing to $\Omega_1$. $\nabla_\textbf{n} y$ is normal derivative of $y$ on $\Gamma$. $g\in H^{\frac{1}{2}} (\Gamma)$, and  $u_a, u_b \in L^2 (\Omega)$ with $u_a\leq u_b$ a.e. in $\Omega$. For the sake of simplicity, we choose homogeneous
	boundary condition on $\partial\Omega$, since similar results can be obtained for other boundary
	conditions.
	
	For the elliptic interface problem, it is well-known that the solution $y$ of problem \eqref{eqstrongstate} generally not in $H^2(\Omega)$. It may leads to the reduced accuracy for numerical approximations $\cite{Babu1970The,Xu1982Estimate}$.  \cite{ Barretts87Fit, Brambles96fin, Chen98, Huangs02mor, Plums03opt, Lis10opt, Xus16opt, Cai17dis} used body(interface)-fitted mesh to improve the accuracy. However, it is often difficult or expensive to generate complicated body-fitted mesh, and we have to update the mesh, when the interface is moving with time or iteration. Unfitted finite element methods are designed to conquer these difficulties. In general, they use some special shape functions to improve the approximation property of the shape function space. IFEM (Immersed Finite Element Method) $\cite{Li03new, Camps06qua, Li06imm, Fedkiw06imm, Lins07err, Lins15par}$ used some special shape functions, which satisfy interface conditions exactly. It is easy to see the original IFEM is not suitable for the model with non-homogeneous interface condition $[\alpha\nabla_n y ]=g$, because it requires basis functions satisfy the interface conditions exactly. The special IFEM $\cite{He2011Immersed,Han2016A}$ introduce some additional special basis functions to handle the non-homogeneous interface condition, but they don't give the numerical analysis. In $\cite{Yan2007Immersed}$, they constructed  a single function satisfies the non-homogeneous jump conditions by using a level-set representation of the interface, and solve the elliptic interface problem with homogeneous interface condition. The another kind of unfitted finite element method is the XFEM (eXtended Finite Element Method) or another name GFEM (Generalized Finite Element Method). XFEM use additional special basis functions to mimic the local behavior of the exact solution, it has developed for many problems. For the development of the XFEM, we refer to \cite{Babu1994Special,Babu2015THE,Strouboulis2000The,Strouboulis2006The,I2012Stable,moes1999a,Belytschko2009TOPICAL,Nicaise2011Optimal}. In \cite{Kergrene2016Stable,Soghrati2012An,Soghrati2012A,Cheng2010Higher}, XFEM is applied to the elliptic interface problem, but they are only focus on the numerical simulation, they do not give the numerical analysis in their paper.
	
	It should be mentioned that in  \cite{Hansbo2002An}, a special unfitted finite element method was proposed for the elliptic interface problem. This method used additional cut basis functions and coupled with a variant of Nitsche's approach  \cite{Nitsche71ube}. Cut basis functions are some piecewise linear functions discontinuous across the interface, these basis functions are cut at the interface, which improve the approximation property of the shape function space. And the formulation of this method is symmetric positive definite and consistent (in some sense) by using Nitsche's approach. Optimal error analysis are given for the elliptic interface problem with non-homogeneous interface condition in \cite{Hansbo2002An}, which is second order for $L^2$ norm. There are many names of this method, it named as CutFEM in \cite{Hansbo14cut, Burman15cut, Cenanovics16cut, Schott17sta}, as Nitsche-XFEM in \cite{Lehrenfeld2014Optimal,Barrau2012A,Becker2009A}, as DG-XFEM in $\cite{Lehrenfeld2012Analysis,Wang2016High,Lehrenfeld2014The}$.	We call it as Nitsche-XFEM.
	
\begin{figure}[htpb]
	\centering
	\begin{minipage}{5cm}
		\includegraphics[width=5cm]{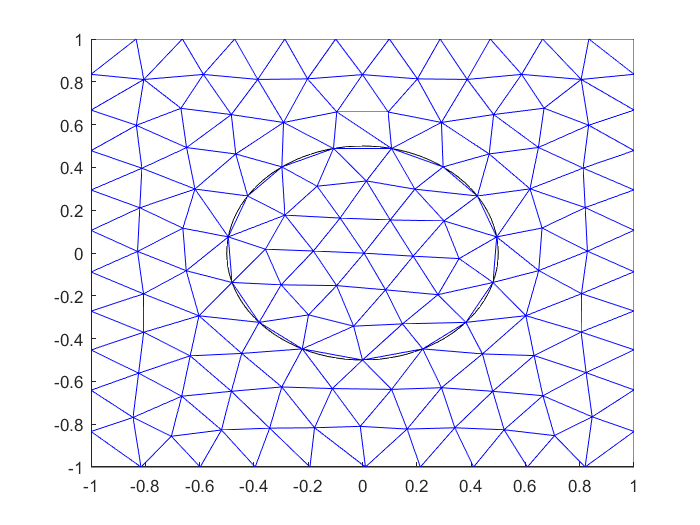}
	\end{minipage}
	\begin{minipage}{5cm}
		\includegraphics[width=5cm]{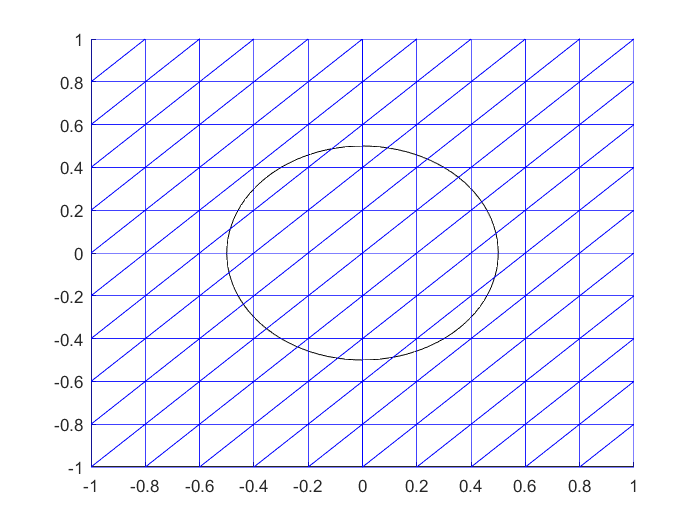}
	\end{minipage}
	\caption{Fitted(left) and Unfitted(right) mesh. The black line is the interface.}
\end{figure}
	
	For special case of the optimal control problem \eqref{eqobjective}-\eqref{eqconstraint}, which $\alpha$ is continuous in $\Omega$ and $g=0$. There are a lot of researches, see \cite{Casas86con,Falk73app,Tiba76err,Becker00ada} for early research, \cite{Li02ada,Hinze05var,Schneider16pos} for control constraint, \cite{Benedix09pos,Hintermuller10goa,Rosch12pri,Rosch12pos,Rosch17rel,Lius09new} for state constraint, \cite{Kohls14con,Gong17ada,Li02ada} for adaptive convergence analysis. However, there are few papers consider the optimal control problem governed by elliptic interface problems. To our best knowledge, IFEM $\cite{Zhang2015Immersed}$ is the first unfitted finite element method applied to this problem. But they only consider the homogeneous interface condition. If model has non-homogeneous interface condition, their method can not be directly extended. 	And in their numerical experiments, we also observe that it's convergence order of $H^1$ norm could reduce from 1 to 0.9, when the mesh refined. This phenomenon also mentioned in \cite{Lin2015Partially}, for some numerical examples, the convergence rates of classic IFEM in both the $H^1$ norm and the $L^2$ norm deteriorated, when the mesh becomes finer.

In this paper, we consider Nitsche-XFEM for a more general model with non-homogeneous interface condition. Nitshche-XFEM is simple, do not need to construct complicated shape functions to satisfy the interface condition compare with IFEM. And it only needs a little more degree of freedoms compare with standard linear finite element method. We followed variational discretization concept $\cite{Hinze05var,Hinze2009Variational}$ to discretize the continuous problem.
 Optimal error estimations are derived for the state, co-state and control in the mesh independent of the interface.  According to the discrete optimal conditions, we choose the proper algorithm to solve the optimal control problem. Numerical experiments confirmed our theoretical results, and also show that the convergence order of Nitsche-XFEM do not reduce, when we refine the mesh. 
	
	The remainder of this paper is organized as following. In section 2, we give some notations and the optimality conditions for the optimal control problem. In section 3, we give a brief introduction for Nitsche-XFEM, some theoretical results associate with Nitsche-XFEM are given. In section 4, we discretize optimal control problem, give the discrete optimality conditions and derive error estimates  for the state, co-state and control of the optimal control problem. In section 5, numerical experiments are given to confirm our theories. In section 6, some implementation details are provided. The last section is the conclusion.

	\section{Notation and optimality conditions}
	In this paper, we shall use the standard sobolev norm $\cite{Adams1975Sobolev}$ and $L^2$ inner product on $\Omega$, $\Omega_1$, $\Omega_2$ and $\Gamma$, we omit $\Omega$ when using the standard norm or $L^2$ inner product in $\Omega$. 
	And define some norms
$$\|u\|^2_{i,\Omega_1 \cup \Omega_2}:= \|u\|^2_{i,\Omega_1}+\|u\|^2_{i,\Omega_2} , \quad i=0,1,2.$$
	We assume $u$ is a function in $L^2(\Omega)$, and $g$ is a function in $H^{\frac12}(\Gamma)$.
Then weak formulation of state equation \eqref{eqstrongstate} is to find $y \in H^1_0(\Omega)$ such that
\begin{equation}
\label{eqweakform}
(\alpha \nabla y ,\nabla v)=(u,v)+(g,v)_{\Gamma}, \quad \forall v \in H^1_0(\Omega).
\end{equation} 
$(\alpha \nabla y ,\nabla v)$ denotes by $a(y,v)$. 

\begin{lemma}
	\label{leregularity}
	Problem \eqref{eqweakform} has a unique solution in $H^2(\Omega_1 \cup \Omega_2) \cap H^1_0(\Omega)$, and 
	$$\|y\|_1 +\|y\|_{2,\Omega_1 \cup \Omega_2} \leq C (\|u\|_{0}+\|g\|_{\frac12,\Gamma}).$$
	C is a positive constant independent of $u$ and $g$.
\end{lemma}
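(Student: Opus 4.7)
My plan has two stages: first prove existence, uniqueness and the $H^1$ bound by Lax--Milgram on $H^1_0(\Omega)$, then upgrade to piecewise $H^2$ regularity by lifting the interface data and invoking classical regularity for a transmission problem with homogeneous jump conditions.

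For the first stage, the bilinear form $a(y,v)=(\alpha\nabla y,\nabla v)$ is bounded by $\alpha\le\max\{\alpha_1,\alpha_2\}$ and coercive on $H^1_0(\Omega)$ by $\alpha\ge\min\{\alpha_1,\alpha_2\}>0$ together with Poincar\'e. The right-hand side $v\mapsto(u,v)+(g,v)_\Gamma$ is a bounded linear functional on $H^1_0(\Omega)$ because $|(u,v)|\le\|u\|_0\|v\|_0$ and, using $H^{\frac12}(\Gamma)\hookrightarrow L^2(\Gamma)$ together with the trace theorem, $|(g,v)_\Gamma|\le\|g\|_{0,\Gamma}\|v\|_{0,\Gamma}\le C\|g\|_{\frac12,\Gamma}\|v\|_1$. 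Lax--Milgram then yields a unique $y\in H^1_0(\Omega)$ satisfying $\|y\|_1\le C(\|u\|_0+\|g\|_{\frac12,\Gamma})$.

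For the second stage I would construct a lift $G\in H^1(\Omega)$ with $G|_{\Omega_i}\in H^2(\Omega_i)$, $[G]=0$, and $[\alpha\nabla_\textbf{n} G]=g$ on $\Gamma$, satisfying $\|G\|_{2,\Omega_1\cup\Omega_2}\le C\|g\|_{\frac12,\Gamma}$. Such $G$ exists because $\Gamma$ is smooth and internal: one extends $g$ into a tubular neighborhood of $\Gamma$, integrates once across the normal direction on a single side, divides by $\alpha$ there, and multiplies by a cutoff supported away from $\partial\Omega$. Then $w:=y-G\in H^1_0(\Omega)$ solves the homogeneous transmission problem
\[-\nabla\cdot(\alpha\nabla w)=u+\nabla\cdot(\alpha\nabla G)=:f\in L^2(\Omega),\quad [w]=0,\quad [\alpha\nabla_\textbf{n} w]=0\text{ on }\Gamma.\]

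Classical elliptic regularity for transmission problems with smooth interface and convex polygonal outer boundary (of Chen--Zou type) then gives $w\in H^2(\Omega_1\cup\Omega_2)$ with $\|w\|_{2,\Omega_1\cup\Omega_2}\le C(\|f\|_0+\|w\|_1)$, and the stated inequality follows by combining this with the $H^2$ bound on $G$ and the already-established $H^1$ bound on $y$. The main obstacle is really this last regularity step: one has to patch interior elliptic regularity for the two subproblems with boundary regularity up to the (possibly singular) outer corners while carrying the homogeneous transmission conditions across $\Gamma$, which is precisely where the convexity of $\partial\Omega$ and the assumption that $\Gamma$ is internal (i.e.\ does not touch $\partial\Omega$) are both essential; the rest is bookkeeping.
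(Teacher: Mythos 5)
The paper does not actually prove Lemma \ref{leregularity}: it is stated without proof as a known regularity result for elliptic transmission problems (of the type established in Babu\v{s}ka's and Chen--Zou's work and used in the same form by Hansbo--Hansbo). Your argument is therefore not ``the paper's approach,'' but it is a correct and sensible self-contained route: Lax--Milgram on $H^1_0(\Omega)$ handles existence, uniqueness and the $\|y\|_1$ bound exactly as you describe, and the reduction $w=y-G$ is sound --- integrating $(\alpha\nabla G,\nabla v)$ by parts on each $\Omega_i$ shows the interface term $(g,v)_\Gamma$ cancels, so $w$ indeed solves the homogeneous-jump problem with right-hand side $u+\nabla\cdot(\alpha\nabla G)|_{\Omega_1\cup\Omega_2}\in L^2(\Omega)$, to which the cited transmission regularity applies. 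Note, though, that your proof is a reduction, not an elimination, of the hard step: the piecewise-$H^2$ regularity for the homogeneous-jump problem on a convex polygon with a smooth internal interface is precisely the nontrivial content, and you still import it from the literature, which is what the paper does wholesale.

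The one point I would tighten is the construction of the lift $G$. ``Extend $g$ to a tubular neighborhood and integrate once in the normal direction'' does not obviously produce an $H^2(\Omega_1)$ function from $g\in H^{\frac12}(\Gamma)$: writing $G=d(x)\,\tilde g(x)/\alpha_1$ with $d$ the signed distance, the second tangential derivatives of $G$ involve two derivatives of $\tilde g$, which a generic extension of an $H^{\frac12}$ function does not control. The clean statement you want is the surjectivity, with bounded right inverse, of the combined trace map $v\mapsto(v|_\Gamma,\partial_{\textbf{n}}v|_\Gamma)$ from $H^2(\Omega_1)$ onto $H^{\frac32}(\Gamma)\times H^{\frac12}(\Gamma)$ for smooth $\Gamma$; choose $G_1\in H^2(\Omega_1)$ with $G_1|_\Gamma=0$, $\partial_{\textbf{n}}G_1|_\Gamma=g/\alpha_1$ and $\|G_1\|_{2,\Omega_1}\leq C\|g\|_{\frac12,\Gamma}$, and set $G=G_1$ in $\Omega_1$, $G=0$ in $\Omega_2$ (the cutoff is unnecessary since $\Gamma$ does not meet $\partial\Omega$, though one must then account for the nonzero trace of $G_1$ on $\partial\Omega_1\cap\partial\Omega$ if $\Omega_1$ touches the outer boundary --- here it does not). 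With that substitution your proof is complete.
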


	Note that $C$ may change in this paper, but still a positive constant independent of the mesh size  $h$ and the location of interface $\Gamma$ relative to the mesh. $p \lesssim q$ denotes $p \leq C q$ and $p \gtrsim q$ denotes $p \geq C q$. $|x|$  denotes the area or length or absolute value of $x$.
	
	Let $u_a, \ u_b \in L^2(\Omega)$ we define $$U_{ad}:=\{ u\in L^2(\Omega):u_{a}\leq u \leq u_{b} \ \text{a.e. in} \ \Omega \}.$$ By using the standard technique in $\cite{Tr2010Optimal}$, we can easily derive the optimality condition of the optimal control problem \eqref{eqobjective}-\eqref{eqconstraint}.
	\begin{lemma}From \label{leoptimalcondition}
		\cite{Tr2010Optimal}.
		The optimal control problem of minimizing \eqref{eqobjective}-\eqref{eqconstraint} has a unique solution. $(y,p,u)\in H_0^1(\Omega)\times H_0^1(\Omega)\times U_{ad}$ such that 
\begin{align}
& a(y,v)=(u,v)_\Omega+(g,v),~~\forall v\in H_0^1(\Omega),\label{eqstate}\\
& a(v,p)=(y-y_d,v),~~\forall v\in H_0^1(\Omega),\label{eqcostate}\\
& (p+a u,v-u) \geq 0,~~\forall v\in U_{ad}.\label{eqprojection}
\end{align}
	\end{lemma}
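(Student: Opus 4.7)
The plan is to follow the classical strategy of Tr\"oltzsch \cite{Tr2010Optimal}, adapted to the interface equation \eqref{eqweakform}. I would organize the argument in three stages: reducing to an unconstrained problem in $u$, proving existence and uniqueness, and then deriving the adjoint-based variational inequality.

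First, I would introduce the control-to-state map $S: L^2(\Omega) \to H_0^1(\Omega)$ sending $u$ to the unique weak solution $y = S(u)$ of \eqref{eqweakform}; well-posedness and the bound $\|S(u)\|_1 \lesssim \|u\|_0 + \|g\|_{\frac12,\Gamma}$ come from Lemma \ref{leregularity}. Because the right-hand side of \eqref{eqweakform} depends linearly on $u$ (with $g$ fixed), $S$ is affine, so the reduced functional $\hat J(u) := J(S(u),u)$ is continuous, coercive, and strictly convex on $L^2(\Omega)$ thanks to the positive Tikhonov term $\frac{a}{2}\|u\|_0^2$. Since $U_{ad}$ is a nonempty, closed, convex subset of $L^2(\Omega)$, the direct method of the calculus of variations yields a unique minimizer $u \in U_{ad}$, and the corresponding state $y = S(u)$ then satisfies \eqref{eqstate} by construction.

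Next, to derive the first-order optimality system, I would compute the G\^ateaux derivative $(\hat J'(u), w) = (S(u) - y_d, S'(u)w) + a\,(u,w)$, where $z := S'(u)w$ solves the linearized state equation $a(z,v) = (w,v)$ for all $v \in H_0^1(\Omega)$. To eliminate $S'(u)w$, I would introduce the adjoint state $p \in H_0^1(\Omega)$ defined by $a(v,p) = (y - y_d, v)$ for all $v \in H_0^1(\Omega)$; existence and uniqueness of $p$ follow from Lax--Milgram applied to the symmetric coercive form $a(\cdot,\cdot)$, giving \eqref{eqcostate}. Testing the adjoint equation with $v = z$ and the linearized state equation with $v = p$ gives $(y - y_d, z) = a(z, p) = (w, p)$, whence $(\hat J'(u), w) = (p + au, w)$. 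Applying the standard first-order necessary condition for minimization of a convex functional over a convex set, $(\hat J'(u), v - u) \geq 0$ for all $v \in U_{ad}$, then produces \eqref{eqprojection}.

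The main obstacle is really only bookkeeping: the argument depends on $H_0^1$-coercivity and continuity of $a(\cdot,\cdot)$, which are automatic here since $\alpha_1,\alpha_2 > 0$, and on the fact that Lemma \ref{leregularity} applies uniformly to the state, its linearization (take $g=0$), and the adjoint (whose data lies in $L^2$). One mild notational subtlety is the clash between the scalar Tikhonov weight $a$ in \eqref{eqobjective} and the bilinear form $a(\cdot,\cdot)$, which must be kept straight; otherwise the proof is entirely routine once the adjoint framework is in place.
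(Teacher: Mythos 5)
Your proposal is correct and is precisely the ``standard technique'' from \cite{Tr2010Optimal} that the paper invokes without writing out: reduction via the affine control-to-state map, strict convexity from the Tikhonov term for existence and uniqueness, and elimination of the linearized state by the adjoint to obtain the variational inequality. The paper gives no proof of its own beyond this citation, so there is nothing further to reconcile.
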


	We call \eqref{eqcostate} as the co-state equation, and $p$ is the co-state or adjoint state. In addition, by use the Lemma $\ref{leregularity}$, we have $p\in H^2(\Omega_1 \cup \Omega_2)$. The variational inequality \eqref{eqprojection} is equivalent with a $L^2$ projection in $U_{ad}$, \text{i.e.}
	\begin{equation}\label{eqprojection2}
	u=P_{U_{ad}} (-\frac{1}{a} p).
	\end{equation} 
	
	\section{Nitsche-XFEM for state and co-state equations}
	\subsection{Extended finite element space}
	From the optimality condition, it is clear the state $y$ and co-state $p$ can be viewed as the solutions of two i!terface problems. Be aware of the solution's properties of interface problem, it is a natural idea to build a finite element space discontinuous across the interface $\Gamma$.
	We build the extended finite element space as following.
	
	Let $V_h^P$ be the standard linear finite element space with respect to triangulation $\mathcal{T}_h$, $\varphi_i$ is the nodal basis function of mesh points $P_i$, $i$ is the index of mesh points. We define the cut finite element space
	 $$V_h^\Gamma:=span\{\widetilde{ \varphi_{i} } :supp(\varphi_i) \cap \Gamma \ne \emptyset\}.$$ where the cut basis function
	$$\widetilde{ \varphi_{i} } = \left \{ \begin{array}{ll}
	0, & in \ \Omega_m,\\
	\varphi_{i}, & in  \ \Omega \setminus \Omega_m, 
	\end{array} \right. \ \text{if} \ P_i \ \text{in} \  \ \Omega_m. \quad m=1,2.$$
	It is clear that $\widetilde{ \varphi_{i} } \ne 0$ only at some elements, we call these elements as interface elements and other elements as non-interface elements. And every interface element has an intersection with the interface $\Gamma$.
	Then we define the extended finite element space as $$V_h:=\{v_h \in V_h^P \oplus V_h^\Gamma :v_h |_{\partial\Omega} =0 \}.$$ Notice the basis function is piecewise linear and continuous, so for any $v_h \in V_h$, $v_h$ is linear and continuous at $\Omega_1$ or $\Omega_2$.

	\begin{figure} [!hbtp]
		\label{fgspecialbase}
		\centering
		
		\begin{minipage}{3cm}		
			\includegraphics[width=3cm]{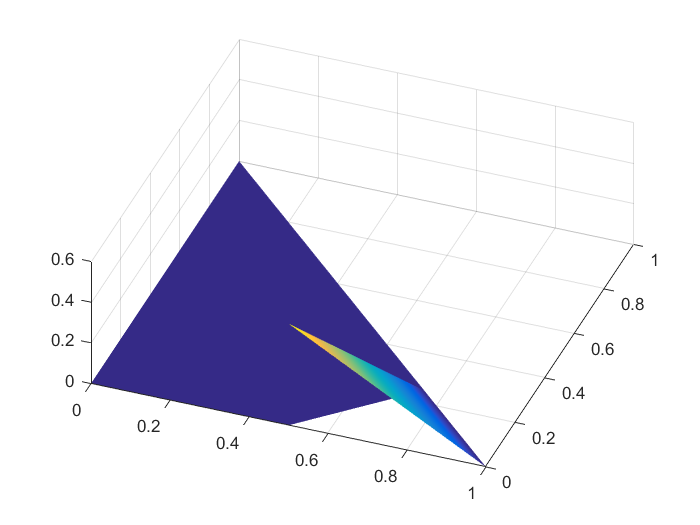}
		\end{minipage}
		\begin{minipage}{3cm}
			\includegraphics[width=3cm]{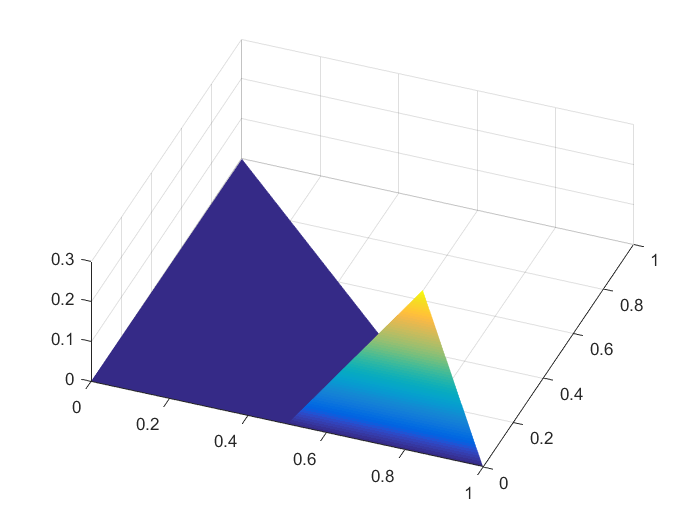}
		\end{minipage}
		\begin{minipage}{3cm}
			\includegraphics[width=3cm]{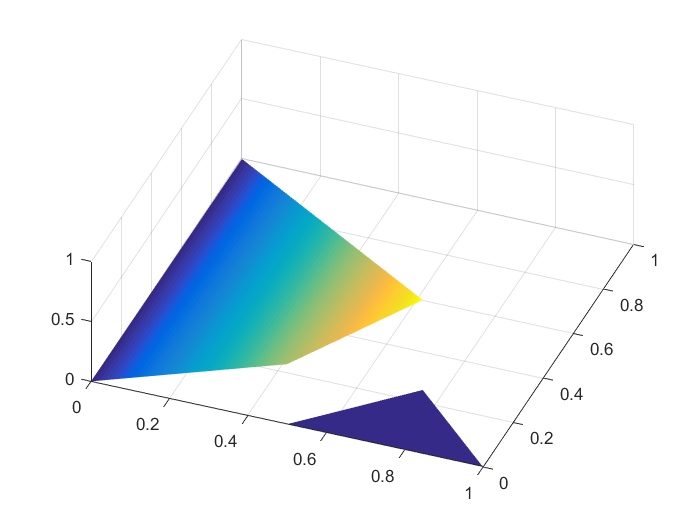}
		\end{minipage}
		\caption{	$\widetilde{\varphi_{i}}$ in 
			reference element $T$ with interface $y=x-\frac12 $. }
	\end{figure}
		
		\subsection{Scheme and results of Nitsche-XFEM}
		 
		In order to give the scheme and results of Nitsche-XFEM. We make the standard assumptions about the interface and conforming mesh like $\cite{Hansbo2002An}$.
		
		\noindent \textbf{A1:} The triangulation $\mathcal{T}_h$ is non-degenerate, i.e.
		$$\frac{h_T}{\rho_T}  \leq C, \quad \forall T \in \mathcal{T}_h.$$
		where $h_T$ is the diameter of $T$, $h= \mathop{max} \limits_{T \in \mathcal{T}_h} \{h_T\}$, and $\rho_T$ is the diameter of the largest ball contained in $T$. 
		
		\noindent \textbf{A2:} The interface $\Gamma$ intersects each element with two different points on different edges.
		
		\noindent \textbf{A3:} Let $\Gamma_T$ be the interface restrict in element $T$. And $\Gamma_{T,h}$ is the straight line segment connecting the points of intersection between interface $\Gamma$ and edges of $T$.  In local coordinates $(\xi,\eta)$,  $\Gamma_{T}$ and $\Gamma_{T,h}$ can be written in the following form,
		$$\Gamma_{T,h}=\{( \xi , \eta ) : 0 \le \xi \le | \Gamma_{T,h}|,  \eta=0 \},$$  
	 $$\Gamma_{T}=\{( \xi , \eta) : 0 \le \xi \le | \Gamma_{T,h}|,  \eta=\psi (\xi) \}.$$
		Where $\psi$ is a function.
		
		These assumptions are not strictly and  always fulfilled on the sufficiently fine mesh. To illustrate the method, we give some notations about Nitsche-XFEM. For each element $T$,  $$T_m:=T|_{\Omega_m}, \ k_m:=\frac {|T_m|}{|T|}, \ m=1,2.$$
	 and
	$$v_{m,h}:=v_h|_{\Omega_m}, \ \{\phi\}:=(k_1\phi_1+k_2\phi_2)|_{\Gamma}, \ \phi_m:=\phi|_{\Omega_m}.$$
	By means of a variant Nitsche's method in $\cite{Hansbo2002An}$, for state equation \eqref{eqstate} we have a symmetric positive scheme
	\begin{align} 
		( \alpha \nabla y^h,\nabla v_h)_{\Omega_1 \cup \Omega_2} -(\{ \alpha \nabla_\textbf{n} y^h\},[v_h])_{\Gamma}-(\{ \alpha \nabla_\textbf{n} v_h \},[y^h])_{\Gamma}+\lambda ([y^h],[v_h])_{\Gamma}= \nonumber \\
		(u,v_h)+( k_2 g,v_{1,h} )_{\Gamma}+( k_1 g,v_{2,h})_{\Gamma}, \quad \forall v_h \in V_h.  \label{eqnfem}
	\end{align}
	 
	Where $\lambda=\widetilde{C}h^{-1}\max\{\alpha_1,\alpha_2\}$ is the stable parameter, with positive constant $\widetilde{C}$ sufficiently large.
	\eqref{eqnfem} denotes by 
	\begin{equation} \label{eqaustate}
	a_h(y^h,v_h)=l(v_h). 
	\end{equation}
	We also have the discrete form of co-state equation \eqref{eqcostate}
	\begin{equation} \label{eqaucostate}
		a_h(v_h,p^h)=(y-y_d,v_h).
	\end{equation}
	 \begin{remark}
		In our view, terms $(\{ \alpha \nabla_\textbf{n} y_h\},[v_h])_{\Gamma}$ and $(\{ \alpha \nabla_\textbf{n} v_h \},[y_h])_{\Gamma}$ make the scheme \eqref{eqnfem} consistent and symmetric. The scheme is consistent in the sense $a_h(y-y_h,v_h)=0.$ And $\lambda ([y_h],[v_h])_{\Gamma}$ makes the scheme stable. Moreover, the method can be viewed as a combination of extended finite element space and Nitsche's method, that is why we call it as Nitsche-XFEM.
	\end{remark}
		Let us define the mesh dependent norm of Nitsche-XFEM $\cite{Hansbo2002An}$
	$$||| v ||| ^2:=\|\nabla v\|^2_{0,{\Omega_1 \cup \Omega_2}}+\|\{ \nabla_\textbf{n} v \}\|^2_{-1/2,\Gamma}+\|[v] \|^2_{1/2,\Gamma}, \\$$ 
	$$\|v\|^2_{1/2,\Gamma}:= \underset{T \in \mathcal{T}_h^{\Gamma}}  {\sum} h_T^{-1} \|v\|^2_{0,\Gamma_T},$$
	$$\|v\|^2_{-1/2,\Gamma}:= \underset{T \in \mathcal{T}_h^{\Gamma}}  {\sum} h_T \|v\|^2_{0,\Gamma_T}.$$
With assumptions A1-A3 and if we choose $\lambda$ large enough. We have the following lemmas and theorem. 
\begin{lemma} \label{lestablity}
	$ \cite{Hansbo2002An}$.
	The discrete form \eqref{eqnfem} is coercive on $V_h,$ i.e. 
	$$ a_h(v_h,v_h) \gtrsim ||| v_h |||^2 , \quad \forall v_h \in V_h, $$
	And, $$a_h(u_h,v_h)\leq ||| u_h ||| ,||| v_h |||,\quad \forall u_h,v_h \in V_h.$$
\end{lemma}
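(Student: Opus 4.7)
The plan is to follow the symmetric Nitsche analysis of \cite{Hansbo2002An}, exploiting the fact that the weighted average $\{\phi\}=k_1\phi_1+k_2\phi_2$ with $k_m=|T_m|/|T|$ absorbs the possibly vanishing measure of a sub-element cut by $\Gamma$. For coercivity, first expand
$$a_h(v_h,v_h)=\|\sqrt{\alpha}\nabla v_h\|^2_{0,\Omega_1\cup\Omega_2}-2(\{\alpha\nabla_\textbf{n} v_h\},[v_h])_\Gamma+\lambda\|[v_h]\|^2_{0,\Gamma},$$
and treat the cross term element-by-element via Cauchy--Schwarz and Young's inequality with a scaling $h_T^{1/2}$ vs.\ $h_T^{-1/2}$:
$$2|(\{\alpha\nabla_\textbf{n} v_h\},[v_h])_{\Gamma_T}|\le \epsilon h_T\|\{\alpha\nabla_\textbf{n} v_h\}\|^2_{0,\Gamma_T}+\epsilon^{-1}h_T^{-1}\|[v_h]\|^2_{0,\Gamma_T}.$$

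The decisive step is the discrete trace estimate on a cut element. Since $v_h$ is piecewise linear, $\nabla v_{m,h}$ is constant on $T_m$, so $\|\nabla v_{m,h}\|^2_{0,\Gamma_T}=|\Gamma_T|\,|\nabla v_{m,h}|^2$ and $\|\nabla v_{m,h}\|^2_{0,T_m}=|T_m|\,|\nabla v_{m,h}|^2$. Expanding $\{\alpha\nabla_\textbf{n} v_h\}^2\le 2\sum_m k_m^2\alpha_m^2(\nabla_\textbf{n} v_{m,h})^2$ and writing $k_m^2=k_m\cdot |T_m|/|T|$, the telescoping factor together with $|\Gamma_T|\le Ch_T$ and $|T|\ge C h_T^2$ (assumptions A1--A3) yields
$$h_T\|\{\alpha\nabla_\textbf{n} v_h\}\|^2_{0,\Gamma_T}\le C\max(\alpha_1,\alpha_2)\bigl(\alpha_1\|\nabla v_{1,h}\|^2_{0,T_1}+\alpha_2\|\nabla v_{2,h}\|^2_{0,T_2}\bigr),$$
uniformly in the position of $\Gamma$ relative to the mesh. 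Without the $k_m$ weighting the constant would blow up when $T_m$ is tiny; this is precisely the obstacle that the Hansbo weighting resolves. Summing over interface elements gives $\|\{\alpha\nabla_\textbf{n} v_h\}\|^2_{-1/2,\Gamma}\le C_{\mathrm{tr}}\max(\alpha_1,\alpha_2)\|\sqrt{\alpha}\nabla v_h\|^2_{0,\Omega_1\cup\Omega_2}$. Choosing $\epsilon$ small and then $\widetilde C$ (hence $\lambda=\widetilde Ch^{-1}\max(\alpha_1,\alpha_2)$) sufficiently large absorbs the cross and jump remainders, producing
$$a_h(v_h,v_h)\gtrsim \|\nabla v_h\|^2_{0,\Omega_1\cup\Omega_2}+\|[v_h]\|^2_{1/2,\Gamma}.$$
The missing piece $\|\{\nabla_\textbf{n} v_h\}\|^2_{-1/2,\Gamma}$ of the triple norm is then dominated by the gradient term via the same trace estimate, which closes the coercivity bound.

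For the boundedness half, apply Cauchy--Schwarz termwise: the volume term is handled at once; for each consistency term the scaling $h_T^{1/2}\cdot h_T^{-1/2}$ inserted into the $\Gamma_T$ inner product factors the bound as $\|\{\alpha\nabla_\textbf{n}\cdot\}\|_{-1/2,\Gamma}\|[\cdot]\|_{1/2,\Gamma}$, both components of the triple norm; the penalty term, after replacing $\lambda$ by $\widetilde Ch_T^{-1}\max(\alpha_1,\alpha_2)$ (admissible by shape regularity $h_T\sim h$), becomes $\|[u_h]\|_{1/2,\Gamma}\|[v_h]\|_{1/2,\Gamma}$. The principal difficulty throughout lies in the cut trace inequality discussed above; everything else is standard bookkeeping once the weighted average is in place.
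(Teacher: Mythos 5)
The paper gives no proof of this lemma at all---it is quoted verbatim from \cite{Hansbo2002An}---and your argument is a correct reconstruction of the proof in that reference: the weighted cut trace inequality exploiting $k_m=|T_m|/|T|$ together with $|\Gamma_T|\lesssim h_T$ and $|T|\gtrsim h_T^2$ is exactly Hansbo--Hansbo's key lemma, and the Young-inequality absorption with $\lambda$ sufficiently large is their coercivity argument. One cosmetic remark: in the continuity half, $\|\{\alpha\nabla_{\textbf{n}}u_h\}\|_{-1/2,\Gamma}$ is not literally a component of $|||\cdot|||$ (the triple norm carries no $\alpha$, and the weighted averages with and without $\alpha$ are not pointwise comparable), so that factor should be routed through the same cut trace estimate to $\|\nabla u_h\|_{0,\Omega_1\cup\Omega_2}$ rather than read off directly.
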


\begin{lemma} $ \cite{Hansbo2002An}$.
	\label{lepriorestimate}
	If $y$ is the solution of problem \eqref{eqweakform} and $y_h$ is the solution of problem \eqref{eqnfem}, then 
	$$||| y-y_h |||  \lesssim h\|y\|_{2,\Omega_1 \cup \Omega_2},$$ 
	$$\|y-y_h\|_0 \lesssim h^2\|y\|_{2,\Omega_1 \cup \Omega_2}.$$
\end{lemma}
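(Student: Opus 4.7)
The plan is to follow the standard Cea-type argument adapted to the non-conforming/Nitsche setting, combined with a duality argument for the $L^2$ estimate.

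First I would establish consistency: for the exact solution $y \in H^2(\Omega_1\cup\Omega_2)\cap H^1_0(\Omega)$ we have $[y]=0$ and $[\alpha\nabla_{\mathbf n} y]=g$ on $\Gamma$, so integrating $-\nabla\cdot(\alpha\nabla y)=u$ element-wise against any $v_h\in V_h$ (using Green's formula on each $T_1$ and $T_2$, noting that $v_h$ is piecewise smooth but jumps across $\Gamma$) yields
\begin{equation*}
a_h(y,v_h) - \lambda([y],[v_h])_\Gamma + (\{\alpha\nabla_{\mathbf n} v_h\},[y])_\Gamma = (u,v_h) + (k_2 g, v_{1,h})_\Gamma + (k_1 g, v_{2,h})_\Gamma,
\end{equation*}
which after using $[y]=0$ on $\Gamma$ reduces to $a_h(y,v_h)=l(v_h)$. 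Subtracting from \eqref{eqaustate} gives Galerkin orthogonality $a_h(y-y^h,v_h)=0$ for all $v_h\in V_h$.

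For the energy estimate, I would construct a quasi-interpolant $I_h y \in V_h$ that exploits the extended space structure: on each $\Omega_m$ extend $y|_{\Omega_m}$ to a $H^2$ function on the whole mesh patch (using the Stein extension for the smooth subdomains), then take the Scott-Zhang or Lagrange interpolant of each extension separately, and glue them together through the cut basis functions. This produces an interpolant satisfying
\begin{equation*}
|||y - I_h y||| \lesssim h\|y\|_{2,\Omega_1\cup\Omega_2},
\end{equation*}
component by component: the $\|\nabla(\cdot)\|_{0,\Omega_1\cup\Omega_2}$ part is standard piecewise $H^2$ interpolation; the jump term $\|[y-I_h y]\|_{1/2,\Gamma}$ and flux term $\|\{\alpha\nabla_{\mathbf n}(y-I_h y)\}\|_{-1/2,\Gamma}$ require the scaled trace inequality on cut elements, e.g., $\|w\|_{0,\Gamma_T}^2 \lesssim h_T^{-1}\|w\|_{0,T_m}^2 + h_T\|\nabla w\|_{0,T_m}^2$, which under assumptions A1--A3 is uniform in the interface position. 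Combining coercivity and continuity from Lemma \ref{lestablity} with Galerkin orthogonality,
\begin{equation*}
|||y^h - I_h y|||^2 \lesssim a_h(y^h - I_h y, y^h - I_h y) = a_h(y - I_h y, y^h - I_h y) \lesssim |||y - I_h y|||\,|||y^h - I_h y|||,
\end{equation*}
so $|||y^h - I_h y||| \lesssim |||y - I_h y|||$ and the triangle inequality finishes the first estimate.

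For the $L^2$ estimate I would use an Aubin--Nitsche duality argument. Let $\varphi\in H^1_0(\Omega)\cap H^2(\Omega_1\cup\Omega_2)$ solve the adjoint interface problem $-\nabla\cdot(\alpha\nabla\varphi)=y-y^h$ in $\Omega\setminus\Gamma$, with $[\varphi]=0$, $[\alpha\nabla_{\mathbf n}\varphi]=0$ on $\Gamma$; by Lemma \ref{leregularity}, $\|\varphi\|_{2,\Omega_1\cup\Omega_2}\lesssim \|y-y^h\|_0$. Testing this dual problem against $y-y^h$, using consistency of $a_h$ for $\varphi$ (since $[\varphi]=0$ and its flux jump vanishes), and Galerkin orthogonality again,
\begin{equation*}
\|y-y^h\|_0^2 = a_h(y-y^h, \varphi) = a_h(y-y^h, \varphi - I_h\varphi) \lesssim |||y-y^h|||\,|||\varphi - I_h\varphi||| \lesssim h^2\|y\|_{2,\Omega_1\cup\Omega_2}\|y-y^h\|_0.
\end{equation*}

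The main technical obstacle is producing the interpolant with robust $h$-estimates in $|||\cdot|||$ uniformly in the cut geometry, in particular controlling $k_m$ and the trace constants on the potentially tiny cut pieces $T_m$; assumption A3 (with the graph representation $\eta=\psi(\xi)$) is exactly what makes the $\Gamma_T$-to-$\Gamma_{T,h}$ comparison and the scaled trace inequality uniform. Everything else is routine once this interpolation bound and the symmetric consistency identity are in hand.
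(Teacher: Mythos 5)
The paper gives no proof of this lemma—it simply cites Hansbo and Hansbo \cite{Hansbo2002An}—and your argument is precisely the one in that reference: consistency of the Nitsche form via element-wise Green's formula and the weighted flux splitting, an interpolant built from subdomain-wise $H^2$ extensions with cut-element trace inequalities to get $|||y-I_hy|||\lesssim h\|y\|_{2,\Omega_1\cup\Omega_2}$, coercivity plus Galerkin orthogonality for the energy estimate, and Aubin--Nitsche duality with the adjoint-consistent dual interface problem for the $L^2$ bound. Your proposal is correct and takes essentially the same route as the paper's source.
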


\begin{theorem}
	\label{thdiscretepoincare}
	Discrete poincar\'{e} inequality, $ a_h(v_h,v_h)\gtrsim \|v_h\|_{0,\Omega_1 \cup \Omega_2}^2, \quad \forall v_h \in V_h$.
\end{theorem}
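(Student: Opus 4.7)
The first step reduces the statement to a purely discrete Poincar\'e estimate on the finite element space. By the coercivity already established in Lemma \ref{lestablity} we have $a_h(v_h,v_h)\gtrsim |||v_h|||^2$ for every $v_h\in V_h$, so it suffices to prove the $h$- and $\Gamma$-independent bound
\[
\|v_h\|_{0,\Omega_1\cup\Omega_2}^2 \;\lesssim\; |||v_h|||^2 \;=\; \|\nabla v_h\|_{0,\Omega_1\cup\Omega_2}^2 + \|\{\nabla_\mathbf{n} v_h\}\|_{-1/2,\Gamma}^2+\|[v_h]\|_{1/2,\Gamma}^2.
\]
The plan is a two-piece argument: on whichever subdomain inherits the homogeneous Dirichlet condition apply a standard Poincar\'e inequality, and then transport the bound across $\Gamma$ to the other subdomain by exploiting the $L^2(\Gamma)$ control of the jump that is already built into $|||\cdot|||$.

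Without loss of generality assume that $\partial\Omega\cap\partial\Omega_1$ has positive one-dimensional measure; at least one of the two subdomains must touch $\partial\Omega$, and otherwise we relabel. Since $v_{h,1}\in H^1(\Omega_1)$ vanishes on $\partial\Omega\cap\partial\Omega_1$, the Poincar\'e inequality on the fixed Lipschitz domain $\Omega_1$ gives $\|v_{h,1}\|_{0,\Omega_1}\lesssim \|\nabla v_{h,1}\|_{0,\Omega_1}\le |||v_h|||$. If $\partial\Omega\cap\partial\Omega_2$ also has positive measure, the symmetric argument completes the proof. The interesting case is $\partial\Omega_2\subset\overline{\Gamma}$, in which $\Omega_2$ is entirely interior and no Dirichlet data are directly available on it.

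In that case the plan is to use the Friedrichs-type inequality $\|v_{h,2}\|_{0,\Omega_2}^2\lesssim \|\nabla v_{h,2}\|_{0,\Omega_2}^2+\|v_{h,2}\|_{0,\Gamma}^2$, valid on the fixed Lipschitz domain $\Omega_2$, and to rewrite its trace via the jump as $v_{h,2}|_\Gamma=v_{h,1}|_\Gamma-[v_h]$. The first summand is controlled by continuity of the trace $H^1(\Omega_1)\to L^2(\Gamma)$ together with the bound of the previous paragraph, so $\|v_{h,1}\|_{0,\Gamma}\lesssim \|v_{h,1}\|_{1,\Omega_1}\lesssim \|\nabla v_{h,1}\|_{0,\Omega_1}\le |||v_h|||$. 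For the jump, the definition of $\|\cdot\|_{1/2,\Gamma}$ and $h_T\le h\lesssim 1$ yield
\[
\|[v_h]\|_{0,\Gamma}^2=\sum_{T\in\mathcal{T}_h^\Gamma}\|[v_h]\|_{0,\Gamma_T}^2\le h\sum_{T\in\mathcal{T}_h^\Gamma}h_T^{-1}\|[v_h]\|_{0,\Gamma_T}^2\lesssim \|[v_h]\|_{1/2,\Gamma}^2\le |||v_h|||^2.
\]
Adding the estimates on $\Omega_1$ and $\Omega_2$ gives $\|v_h\|_{0,\Omega_1\cup\Omega_2}\lesssim |||v_h|||$, which combined with Lemma \ref{lestablity} is the claim.

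The main obstacle is precisely the configuration in which one subdomain does not touch $\partial\Omega$: there is no Dirichlet datum available on $\Omega_2$, and the Poincar\'e constant must instead be purchased by paying the jump in $|||\cdot|||$ through the trace identity $v_{h,2}|_\Gamma=v_{h,1}|_\Gamma-[v_h]$. A routine but necessary check during the write-up is that the Poincar\'e, trace and Friedrichs constants appearing above depend only on the fixed geometries of $\Omega$, $\Omega_1$, $\Omega_2$ and $\Gamma$ and are independent of $h$ and of the relative position of $\Gamma$ with respect to $\mathcal{T}_h$, so that the hidden constant in the final inequality is of the type promised by the theorem.
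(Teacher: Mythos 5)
Your proof is correct, but it follows a genuinely different route from the paper's. The paper argues in the spirit of Chou--Kwak--Wee: after invoking the coercivity of Lemma \ref{lestablity} to get $a_h(v_h,v_h)\gtrsim\|\nabla v_h\|_{0,\Omega_1\cup\Omega_2}^2$, it connects an arbitrary point to $\partial\Omega$ by a piecewise straight path, applies the mean value theorem segment by segment (inserting an auxiliary point $x_*$ where the path meets $\Gamma$, with the assumptions A1--A3 guaranteeing that each segment length squared is controlled by the area of the piece of element it lies in), and sums over elements; the whole estimate is extracted from the gradient part of $|||\cdot|||$ alone. You instead decompose by subdomain: standard Poincar\'e on the subdomain carrying the Dirichlet condition, then a Friedrichs inequality with boundary trace on the interior subdomain, transporting the trace across $\Gamma$ via $v_{h,2}|_\Gamma=v_{h,1}|_\Gamma-[v_h]$ and absorbing the jump through $\|[v_h]\|_{0,\Gamma}^2\le h\,\|[v_h]\|_{1/2,\Gamma}^2$. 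Your approach buys two things: the constants visibly depend only on the fixed continuous geometry (no counting of path segments or of how often an element is revisited), and, more importantly, it makes explicit that the jump contribution in $|||\cdot|||$ is genuinely needed --- for the extended space $V_h$ one can build $v_h$ vanishing on $\Omega_1$ and constant on an interior $\Omega_2$, so a bound by $\|\nabla v_h\|_{0,\Omega_1\cup\Omega_2}$ alone cannot hold, and the telescoping sum in the path argument must in fact pick up the jump $[v_h](x_*)$ at each interface crossing. The paper's write-up glosses over this term; your argument accounts for it cleanly, at the price of a short case distinction on which subdomain meets $\partial\Omega$ (and you rightly note that the Friedrichs step only uses the $\Gamma$ portion of $\partial\Omega_2$, so the degenerate case where $\partial\Omega_2$ touches $\partial\Omega$ in a null set causes no trouble). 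The paper's path technique, on the other hand, is more local and extends to situations where no fixed subdomain geometry is available.
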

\begin{proof}
	We give a proof following from  $\cite{Chou2010Optimal}$. By $Lemma \ \ref{lestablity}$ , when $\lambda$ is large enough, we have
	$a_h(v_h,v_h)\gtrsim  |||v_h|||^2 \gtrsim  \| \nabla v_h\|^2_{0,{\Omega_1 \cup \Omega_2}}.$
	To prove the inequality, we want to choose a piecewise straight line path $I=\bigcup_i\{ x_i,x_{i+1}\}, \ i=0,1,...,n-1.$ $v_h$ is piecewise differentiable and continuous on the path. Clearly  $v_h$ is differentiable and continuous in any non-interface element, we can choose the path as straight line in these elements. In addition, by our assumption A1, we have
	\begin{equation}\label{eqs1}
		|T_p| \gtrsim |x_{i+1}-x_{i}|^2.
	\end{equation}
	$T_p$ is a non-interface element on the path. But $v_h$ is not differentiable and continuous across the interface. If path $I$ pass through the interface, we have to choose a point $x_*$ on the interface. Moreover, we can adjust the location of $x_*$ to make the $|x_{k+1}-x_*|$ and $|x_*-x_{k}|$ as short as possible, So the area of each part of interface element is bounded below by the $|x_*-x_{k}|^2$ and $|x_{k+1}-x_*|^2$ i.e. 
	\begin{equation}\label{eqs2}
		|T_q| \gtrsim  \max \left \{|x_*-x_{k}|^2,|x_{k+1}-x_*|^2 \right \}. 
	\end{equation}
 $T_q$ is any part of interface element.
	
	For any point $x \in \Omega$ there is a sequence of points $x_i$, and $x_0 \in \partial \Omega$. $v_h$ is piecewise continuous, linear and differentiable along the path $I=\bigcup_i\{ x_i,x_{i+1}\}.$  
	And if we choose the proper $x_*$ satisfy the above inequality. Because $v_h(x_0)=0$, by the mean value theorem, \eqref{eqs1}-\eqref{eqs2} and Cauchy-Schwartz inequality. We have
	
	\begin{align*}
	|v_h(x)|^2 &=\left |\sum^{n-1}_{i=0}(v_h(x_{i+1})-v_h(x_{i})) \right |^2 \\ 
	& =\left |\sum^{n-1}_{i=0} \nabla {v_h(\bar {x})(x_{i+1}-x_{i})} \right |^2 \\  
	& \leq  n \sum^{n-1}_{i=0} |\nabla {v_h(\bar {x}_i)}|^2h_i^2  \\ 
	& \lesssim  n\sum^{n-1}_{i=0} | \nabla v_h|^2_{0,T_i}. 
	\end{align*}
	
	Where $h_i=|x_{i+1}-x_{i}|$, $\bar{x}_i$ is a point in $[x_i,x_{i+1}].\\$
	Suppose $x$ is a point in $T$,  then $$ \int_T |v_h|^2 ds \lesssim nh^2\sum^{n-1}_{i=0} | \nabla v_h|^2_{0,T_i}.$$
	We have $nh\leq C$, since the number of line segments in the path is bounded by $Ch^{-1}$. So
	\begin{align}\nonumber
	nh^2 \sum^{n-1}_{i=0} | \nabla v_h|^2_{0,T_i} & \lesssim h\sum^{n-1}_{i=0} | \nabla v_h|^2_{0,T_i} \\ \nonumber
	 \int_T |v_h|^2 ds & \lesssim h \sum^{n-1}_{i=0} | \nabla v_h|^2_{0,T_i}. \nonumber
	\end{align}
	
	Summing the above inequality over $T$,  choose some paths that each $T_i$ appeared most $Ch^{-1}$ times at all the paths. Last, we obtain
	$$ \|v_h\|_{0,\Omega_1 \cup \Omega_2}^2 \lesssim \| \nabla v_h \|^2_{0,{\Omega_1 \cup \Omega_2}},$$
	which complete the proof.
\end{proof}

\section{Discrete optimal control problem}
\subsection{Discrete optimality conditions}

To discretize the optimal control problem, we followed the variational discretization concept in $\cite{Hinze05var}$. The idea is to discretize the state $y$ but not the control $u$ at first. The discretization of the optimal control problem \eqref{eqobjective}-\eqref{eqconstraint} is, find $y_h,u\in V_h \times U_{ad}$  minimizing
\begin{equation} \label{eqdobjective}
J_h(y_h,u)=\frac{1}{2} \int_{\Omega} (y_h-y_d)^2 ds+\frac{a}{2}\int_{\Omega} u^2 ds,
\end{equation} 
with
\begin{equation}\label{eqdstate}
a_h(y_h,v_h)=l(v_h), \quad \forall v_h \in V_h.
\end{equation}
Again, by the standard technique, we also have the discrete form of Lemma $\ref{leoptimalcondition}$. 
\begin{lemma}
	The optimal control problem of minimizing \eqref{eqdobjective}-\eqref{eqdstate} has a unique solution. $(y_h,p_h,u_h)\in V_h\times V_h\times U_{ad}$ such that 
\begin{align}
& a(y_h,v_h)=l_h(v_h), \quad \forall v_h\in V_h,\label{eqdstate2}\\
& a(v_h,p_h)=(y_h-y_d,v_h),\quad \forall v_h\in V_h,\label{eqdcostate}\\
& (p_h+a u_h,v-u_h) \geq 0,\quad \forall v\in U_{ad}.\label{eqdprojection}
\end{align}
\end{lemma}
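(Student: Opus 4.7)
The plan is to follow the classical reduced-cost-functional argument for quadratic optimal control problems, adapted to the discrete setting. First I would eliminate the state: by Lemma \ref{lestablity} the bilinear form $a_h(\cdot,\cdot)$ is coercive and continuous on $V_h$, so for every $u \in L^2(\Omega)$ the discrete state equation \eqref{eqdstate} has a unique solution $y_h = y_h(u) \in V_h$, and the solution map $S_h : L^2(\Omega) \to V_h$, $u \mapsto y_h(u)$, is affine and continuous. Substituting this into $J_h$ yields the reduced functional
\[
\hat{J}_h(u) := J_h(S_h(u), u) = \tfrac{1}{2}\|S_h(u) - y_d\|_0^2 + \tfrac{a}{2}\|u\|_0^2,
\]
which is continuous, strictly convex (the affine state map plus strict convexity of $\tfrac{a}{2}\|u\|_0^2$ with $a>0$), and coercive on $L^2(\Omega)$.

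Next I would handle existence and uniqueness. The admissible set $U_{ad}$ is nonempty, convex, closed and bounded in $L^2(\Omega)$. Combined with continuity, strict convexity and coercivity of $\hat{J}_h$, standard results in convex optimization give a unique minimizer $u_h \in U_{ad}$, and setting $y_h := S_h(u_h)$ produces the unique optimal state. The first-order necessary condition for a convex minimization over a convex set is the variational inequality
\[
\langle \hat{J}_h'(u_h), v - u_h \rangle \geq 0, \qquad \forall v \in U_{ad}.
\]

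To convert this into the stated form I would introduce the adjoint variable. A direct computation of the Gâteaux derivative yields
\[
\langle \hat{J}_h'(u_h), w \rangle = (S_h(u_h) - y_d,\, S_h'(u_h) w) + a(u_h, w),
\]
where $S_h'(u_h) w \in V_h$ is the solution of $a_h(S_h'(u_h)w, v_h) = (w, v_h)$ for all $v_h \in V_h$. Defining $p_h \in V_h$ by the discrete co-state equation \eqref{eqdcostate}, namely $a_h(v_h, p_h) = (y_h - y_d, v_h)$ for all $v_h \in V_h$ (uniquely solvable again by Lemma \ref{lestablity}), and testing with $v_h = S_h'(u_h)w$, I obtain
\[
(y_h - y_d,\, S_h'(u_h) w) = a_h(S_h'(u_h) w, p_h) = (w, p_h),
\]
so that $\langle \hat{J}_h'(u_h), w \rangle = (p_h + a u_h, w)$, which inserted into the variational inequality gives exactly \eqref{eqdprojection}. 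The three equations \eqref{eqdstate2}, \eqref{eqdcostate}, \eqref{eqdprojection} then hold simultaneously.

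The only mildly delicate step is the identification $(y_h - y_d, S_h'(u_h)w) = (w, p_h)$, which requires using the \emph{symmetry} of $a_h$ (so that one may test the primal sensitivity equation against $p_h$ and the adjoint equation against $S_h'(u_h)w$ and equate the two); this symmetry is built into the Nitsche-XFEM formulation \eqref{eqnfem}. Everything else is a routine transcription of the continuous proof of Lemma \ref{leoptimalcondition} from \cite{Tr2010Optimal}, with $H^1_0(\Omega)$ replaced by $V_h$ and $a(\cdot,\cdot)$ replaced by $a_h(\cdot,\cdot)$; accordingly I would not expect any genuine obstacle beyond checking that all the continuous arguments go through with the discrete bilinear form, which they do by Lemma \ref{lestablity} and Theorem \ref{thdiscretepoincare}.
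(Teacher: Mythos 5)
Your argument is correct and is precisely the ``standard technique'' the paper invokes without writing out: the paper's proof of this lemma consists only of the remark that it follows as in Lemma \ref{leoptimalcondition} by the reduced-functional argument of \cite{Tr2010Optimal}, with $H^1_0(\Omega)$ and $a(\cdot,\cdot)$ replaced by $V_h$ and $a_h(\cdot,\cdot)$, which is exactly what you carry out. (One small remark: since the adjoint equation is posed as $a_h(v_h,p_h)=(y_h-y_d,v_h)$ with $p_h$ in the second slot, testing it with $S_h'(u_h)w$ and the sensitivity equation with $p_h$ both produce the same quantity $a_h(S_h'(u_h)w,p_h)$, so the identification goes through without explicitly invoking symmetry of $a_h$ --- though $a_h$ is indeed symmetric here.)
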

Where $ l_h(v_h)=(u_h,v_h)+( k_2 g,v_{1,h} )_{\Gamma}+( k_1 g,v_{2,h})_{\Gamma}$. By variational equality \eqref{eqdprojection}, the control $u$ is discretized implicitly by the projection
\begin{equation}\label{eqprojection3}
u_h=P_{U_{ad}} (-\frac{1}{a} p_h).
\end{equation}
Moreover, if $u_a$ and $u_b$ are constant in $\Omega$ then \eqref{eqprojection3} equivalent to
\begin{equation}\label{eqprojection4}
u_h=min\{u_b,max\{u_a, -\frac{1}{a} p_h \}\}.
\end{equation}
Note $u_h$ may not in the extended finite element space $V_h$, but still in a finite dimension subspace of $U_{ad}$. So it is possible to solve the discrete system \eqref{eqdstate2}-\eqref{eqdprojection}. 

In order to get error estimates between the solutions $(y,p,u)$ and $(y_h,p_h,u_h)$, we recall the discrete form of state and co-state equation
\begin{align} 
&a_h(y^h,v_h)=l(v_h), \quad \forall v_h \in V_h, \label{eqastate} \\  
&a_h(v_h,p^h)=(y-y_d,v_h), \quad \forall v_h \in V_h. \label{eqacostate}
\end{align}
By Lemma $\ref{lepriorestimate}$ and regularity of $y$ and $p$, we have
$$|||y-y^h||| \lesssim h\|y\|_{2,\Omega_1 \cup \Omega_2}, \ \ \ \ \ \|y-y^h\|_0 \lesssim h^2\|y\|_{2,\Omega_1 \cup \Omega_2},$$ 
$$|||p-p^h||| \lesssim h\|p\|_{2,\Omega_1 \cup \Omega_2}, \ \ \ \ \ \|p-p^h\|_0 \lesssim h^2\|p\|_{2,\Omega_1 \cup \Omega_2}.$$  

\begin{theorem}\label{th1}
	Let $(y,p,u)\in H_0^1(\Omega)\times H_0^1(\Omega)\times U_{ad}$ and $(y_h,p_h,u_h)\in V_h\times V_h\times U_{ad}$ be the solutions to the continuous optimal control problem  \eqref{eqstate}-\eqref{eqprojection} and discrete optimal control problem \eqref{eqdstate2}-\eqref{eqdprojection}, respectively. And $(y^h,p^h)\in V_h\times V_h$ is the solutions of \eqref{eqastate}-\eqref{eqacostate}. Then we have 
	\begin{eqnarray}
	a^{\frac12} \|u-u_h\|_0+\|y-y_h\|_0 &\lesssim& \|y-y^h\|_0+{a^{-\frac12}}\|p-p^h\|_0, \label{t1} \\
	\|p-p_h\|_{0} &\lesssim& \|p-p^h\|_{0} + \|y-y_h\|_{0}, \label{t2} \\
	|||y-y_h||| &\lesssim& |||y-y^h|||+\|u-u_h\|_{0},  \label{t3} \\
	|||p-p_h|||&\lesssim& |||p-p^h|||+\|y-y_h\|_{0}.\label{t4}
	\end{eqnarray}
\end{theorem}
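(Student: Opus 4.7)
The plan is to split each error into an \emph{auxiliary-approximation part} (the Nitsche-XFEM error $y-y^h$ or $p-p^h$ computed with the exact control $u$ or exact state $y$) plus a \emph{fully discrete difference} ($y^h-y_h$ or $p^h-p_h$), and to estimate the discrete differences by testing the natural Galerkin identities against themselves.

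First I would tackle \eqref{t1}, which couples the control and state errors. Testing \eqref{eqprojection} with $v=u_h$ and \eqref{eqdprojection} with $v=u$ and adding gives
\begin{equation*}
a\|u-u_h\|_0^2 \le (p-p_h,\,u_h-u) = (p-p^h,\,u_h-u) + (p^h-p_h,\,u_h-u).
\end{equation*}
The key manipulation is to rewrite the second summand using the two Galerkin orthogonalities
\begin{align*}
a_h(v_h,\,p^h-p_h) &= (y-y_h,\,v_h),\\
a_h(y^h-y_h,\,v_h) &= (u-u_h,\,v_h),
\end{align*}
both valid for every $v_h\in V_h$. Setting $v_h=y^h-y_h$ in the first equation and $v_h=p^h-p_h$ in the second, and invoking the symmetry of $a_h$, I would obtain
\begin{equation*}
(p^h-p_h,\,u_h-u) = -(y-y_h,\,y^h-y_h) = -(y-y_h,\,y^h-y) - \|y-y_h\|_0^2.
\end{equation*}
Plugging this back, applying Cauchy--Schwarz, and using Young's inequality with weight $a$ on the $(p-p^h,u_h-u)$ term and weight $1$ on the remaining product allows $\tfrac{a}{2}\|u-u_h\|_0^2$ and $\tfrac12\|y-y_h\|_0^2$ to be absorbed on the left, giving \eqref{t1} after taking square roots.

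The remaining estimates \eqref{t2}--\eqref{t4} are of the C\'ea-lemma flavour. For \eqref{t2} and \eqref{t4} I would decompose $p-p_h=(p-p^h)+(p^h-p_h)$ and note that $p^h-p_h\in V_h$ satisfies $a_h(v_h,p^h-p_h)=(y-y_h,v_h)$. Testing with $v_h=p^h-p_h$ and using coercivity (Lemma \ref{lestablity}) together with the discrete Poincar\'e inequality (Theorem \ref{thdiscretepoincare}), which yields $\|v_h\|_0\lesssim|||v_h|||$ on $V_h$, gives both $|||p^h-p_h|||\lesssim\|y-y_h\|_0$ and $\|p^h-p_h\|_0\lesssim\|y-y_h\|_0$; the triangle inequality then produces \eqref{t2} and \eqref{t4}. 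Estimate \eqref{t3} is entirely parallel, using $a_h(y^h-y_h,v_h)=(u-u_h,v_h)$ and testing with $v_h=y^h-y_h$.

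The main obstacle is the adjoint identity $(u-u_h,p^h-p_h)=(y-y_h,y^h-y_h)$ used in deriving \eqref{t1}: it is where the discrete state and co-state equations have to mesh cleanly, and it hinges on (i) $y^h-y_h$ and $p^h-p_h$ both lying in $V_h$, so each can legitimately be used as a test function in the other equation, and (ii) symmetry of the Nitsche bilinear form $a_h$. Once this identity is secured the rest is standard Young/Cauchy--Schwarz bookkeeping; the delicate point is choosing the Young weights so as to recover the precise $a$-dependence $a^{1/2}\|u-u_h\|_0 \lesssim a^{-1/2}\|p-p^h\|_0 + \|y-y^h\|_0$ stated in \eqref{t1}.
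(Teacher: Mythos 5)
Your proposal is correct and follows essentially the same route as the paper: the variational inequalities combined through the adjoint identity $(u-u_h,p^h-p_h)=(y-y_h,y^h-y_h)$ for \eqref{t1}, and coercivity plus the discrete Poincar\'e inequality applied to the Galerkin-orthogonality relations for \eqref{t2}--\eqref{t4}. (Your sign bookkeeping in the key identity is in fact cleaner than the paper's, which contains a harmless sign typo at that step.)
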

\begin{proof}
	First, by \eqref{eqdstate2}-\eqref{eqdcostate} and  \eqref{eqastate}-\eqref{eqacostate},
	we have
	\begin{align}
		a_h(y_h-y^h,v_h)&=(u_h-u,v_h), \quad \forall v_h \in V_h, \label{eqt1} \\
		a_h(v_h,p_h-p^h)&=(y_h-y,v_h), \quad \forall v_h \in V_h. \label{eqt2}
	\end{align}
	then
	\begin{align}
    (y_h-y,y_h-y^h)&=a_h(y_h-y^h,p_h-p^h) \label{eqt3} \\
 	&=(u_h-u,p_h-p^h)\label{eqt4}.
	\end{align}
 Set $v=u_h$ in \eqref{eqprojection} and $v=u$ in \eqref{eqdprojection}, we get
	\begin{align}
	(au+p,u_h-u) &\geq 0,  \label{eqva1}\\
 	(au_h+p_h,u-u_h) &\geq 0. \label{eqva2}
	\end{align}
Add together the above inequalities, we get $(a(u-u_h)+p-p_h,u_h-u) \geq 0.$
	Then by \eqref{eqt4},
	\begin{align*}
		 a\|u-u_h\|^2_0 & \leq (u_h-u,p-p_h) \\
		 &=(u_h-u,p-p^h)+(u_h-u,p^h-p_h) 	\\
		  &=(u_h-u,p-p^h)+(y_h-y,y_h-y^h)  \\
		  &\leq \frac{1}{2}(a\|u_h-u\|^2_0+\frac{1}{a}\|p-p^h\|^2_0)+(y_h-y,y_h-y^h) \\	
		  &\leq \frac{1}{2}(a\|u_h-u\|^2_0+\frac{1}{a}\|p-p^h\|^2_0) -\frac{1}{2}\|y-y_h\|_0^2+\frac{1}{2}\|y-y^h\|_0^2,
	\end{align*}
	which implies \eqref{t1}. 
	
	Second let's show \eqref{t2}, from Lemma $\ref{lestablity}$, Theorem $\ref{thdiscretepoincare}$ and \eqref{eqt2}, we have
\begin{align*}
	|| p_h-p^h||^2_{0}&\lesssim |||p_h-p^h|||^2\\
	&\lesssim a_h(p_h-p^h,p_h-p^h)=(y_h-y, p_h-p^h)\\
	&\lesssim ||y_h-y||_{0}|| p_h-p^h||_{0}.
\end{align*}
By triangle inequality
\begin{align*}
\|p-p_h\|_0 & \leq \|p-p^h\|_0+\|p^h-p_h\|_0 \\
& \lesssim \|p-p^h\|_0+\|y_h-y\|_0 .
\end{align*}
The last equality is \eqref{t2}.

Third, let us show \eqref{t3}. From Lemma $\ref{lestablity}$, Theorem $\ref{thdiscretepoincare}$ and \eqref{eqt1}, we obtain
\begin{align*}
|||y_h-y^h|||^2 &\lesssim a_h(y_h-y^h,y_h-y^h)= (u_h-u,y_h-y^h) \\
&\leq \|u-u_h\|_{0} \|y-y^h\|_0 \\
&\lesssim \|u-u_h\|_{0}|||y_h-y^h|||.
\end{align*}
Together with triangle inequality, we have \eqref{t3}.

Finally, let us show \eqref{t4}. From Lemma \ref{lestablity}, Theorem $\ref{thdiscretepoincare}$ and \eqref{eqt2}, we get
\begin{align*}
	|||p_h-p^h|||^2&\lesssim a_h(p_h-p^h,p_h-p^h)=(y_h-y, p_h-p^h)\\
	&\lesssim||y_h-y||_{0}|| p_h-p^h||_{0}\\
	&\lesssim ||y_h-y||_{0}||| p_h-p^h|||,
\end{align*}
which, together with the triangle inequality, indicates  \eqref{t4}.
\end{proof}

 We immediately have the following error estimates for the optimal control problem.
\begin{theorem}\label{th2}
	Let $(y,p,u)\in \left(H_0^1(\Omega)\cap H^2 (\Omega_1 \cup \Omega_2)\right)\times \left(H_0^1(\Omega)\cap H^2 (\Omega_1 \cup \Omega_2)\right)\times U_{ad}$ and $(y_h,p_h,u_h)\in V_h\times V_h\times U_{ad}$ be  the solutions to the continuous problem (\ref{eqstate})-(\ref{eqprojection}) and the discrete problem (\ref{eqdstate2})-(\ref{eqdprojection}), respectively. Then we have 
	\begin{eqnarray}
	\|u-u_h\|_{0}+\|y-y_h\|_{0} +\|p-p_h\|_{0} &\lesssim& h^2 (\|y\|_{2,\Omega_1\cup \Omega_2}+\|p\|_{2,\Omega_1\cup \Omega_2}),\label{u0}\\
	|||y-y_h||| + |||p-p_h|||&\lesssim& h (\|y\|_{2,\Omega_1\cup \Omega_2}+\|p\|_{2,\Omega_1\cup \Omega_2}). \label{y00}
	\end{eqnarray}
	Moreover, if $u$ is unconstrained, $i.e.$ $U_{ad}=L^2(\Omega)$. We also have $$|||u-u_h|||\lesssim h (\|y\|_{2,\Omega_1\cup \Omega_2}+\|p\|_{2,\Omega_1\cup \Omega_2}). \label{u00}$$
\end{theorem}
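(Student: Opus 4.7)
The plan is to chain Theorem \ref{th1} with the a priori error estimates of Lemma \ref{lepriorestimate} applied to the auxiliary discrete problems (\ref{eqastate})--(\ref{eqacostate}); once those bounds are in hand, the statement is essentially a packaging exercise. Since $y,p\in H^{2}(\Omega_{1}\cup\Omega_{2})$, Lemma \ref{lepriorestimate} yields
\begin{equation*}
\|y-y^{h}\|_{0}\lesssim h^{2}\|y\|_{2,\Omega_{1}\cup\Omega_{2}},\quad \|p-p^{h}\|_{0}\lesssim h^{2}\|p\|_{2,\Omega_{1}\cup\Omega_{2}},
\end{equation*}
together with the corresponding $O(h)$ bounds for $|||y-y^{h}|||$ and $|||p-p^{h}|||$.

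First I would substitute the two $L^{2}$ bounds above into inequality (\ref{t1}); this immediately controls $a^{1/2}\|u-u_{h}\|_{0}+\|y-y_{h}\|_{0}$ by $h^{2}(\|y\|_{2,\Omega_{1}\cup\Omega_{2}}+\|p\|_{2,\Omega_{1}\cup\Omega_{2}})$, giving the $u$- and $y$-parts of (\ref{u0}). Feeding the resulting $\|y-y_{h}\|_{0}$ bound together with $\|p-p^{h}\|_{0}\lesssim h^{2}\|p\|_{2,\Omega_{1}\cup\Omega_{2}}$ into (\ref{t2}) produces the $p$-part of (\ref{u0}).

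For (\ref{y00}) I would use (\ref{t3}): the a priori term $|||y-y^{h}|||\lesssim h\|y\|_{2,\Omega_{1}\cup\Omega_{2}}$ is the leading contribution, while the control-error term $\|u-u_{h}\|_{0}$ is already of order $h^{2}$ by the preceding step and is absorbed. The analogous argument with (\ref{t4}) and the $O(h^{2})$ bound on $\|y-y_{h}\|_{0}$ handles $|||p-p_{h}|||$.

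Finally, when $U_{ad}=L^{2}(\Omega)$ the variational inequalities (\ref{eqprojection}) and (\ref{eqdprojection}) reduce to equalities, so $u=-p/a$ and $u_{h}=-p_{h}/a$; linearity then gives $|||u-u_{h}|||=a^{-1}|||p-p_{h}|||$, and the extra estimate follows at once from the $|||p-p_{h}|||$ bound already proved. There is no serious obstacle: the proof is a direct assembly of Theorem \ref{th1} with Lemma \ref{lepriorestimate}, and the only thing one must verify is that each ``cross'' error term appearing on the right of (\ref{t3})--(\ref{t4}) has already been shown to be $O(h^{2})$ and is therefore dominated by the $O(h)$ a priori terms.
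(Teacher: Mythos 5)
Your proposal is correct and coincides with the paper's intended argument: the paper presents Theorem \ref{th2} as an immediate consequence of Theorem \ref{th1} combined with the a priori bounds for $y-y^h$ and $p-p^h$ from Lemma \ref{lepriorestimate}, which is exactly the chaining you carry out, including the reduction $u=-p/a$, $u_h=-p_h/a$ in the unconstrained case. No gaps.
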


	\section{Numerical results}
	In this section, we consider the optimal control problem with the following state equation. 
	\begin{equation}\label{eqstrongstate2}
	\left\{
	\begin{array}{rll}
	& -\nabla\cdot(\alpha(x)\nabla y)=u+f, & \text{ in }\Omega \\
	& y=y_b, & \text{ on }\partial\Omega \\
	& [y]=0,[\alpha\nabla_n y ]=g, & \text{ on }\Omega\\
	\end{array}
	\right.
	\end{equation}
Where $f\in L^2(\Omega)$ and $y_b\in H^{\frac{3}{2}}(\partial\Omega)$. In this case, it is easy to check our theory still valid.

We construct the optimal control problem with analytical state, co-state and control like $\cite{Tr2010Optimal}$. The procedure is,  give the analytical $y$, $p$ satisfy the interface and boundary conditions at first, second we compute the corresponding $u$, $y_d$ and $f$ by
\begin{align*} 
	&u=P_{U_{ad}}(-\frac{1}{a}p),\\
	&- \nabla \cdot (\alpha(x)\nabla y)=u+f, \\
	&- \nabla \cdot (\alpha(x)\nabla p)=y-y_d. 
\end{align*}
Then we use the method in this paper to discretize the optimal control problem and choose proper algorithms to solve the discrete system. See next section for more details about implementation.

In all numerical examples we use N $\times$ N uniform triangular mesh, and show the errors in $H^1$ semi-norm and $L^2$ norm. 

		\noindent \textbf{Example 1. Segment interface} 
\quad Interface $\Gamma$ is a line segment,
$$x_2=kx_1+b, \ k= -\frac{\sqrt{3}}{3},\ b=\frac{(6+\sqrt{6}-2\sqrt{3})}{6}.$$
 $\Omega$ is a domain $[0,1]\times[0,1]$,
  $$\Omega_1=\{(x_1,x_2):kx_1+b-x_2>0 \} \cap \Omega , \quad \Omega_2=\{(x_1,x_2):kx_1+b-x_2\le0 \} \cap \Omega.$$
   We choose $\alpha_1=1$, $\alpha_2=100$, stable parameter $\lambda=1000h^{-1}$, regulation parameter $a=0.01$ and $U_{ad}=L^2(\Omega)$.
	\begin{eqnarray}
	u(x_1,x_2)=
	\begin{cases}
	(x_2-kx_1-b)x_1(x_1-1)x_2(x_2-1)sin(x_1x_2),& \mbox{ in $ \Omega_1$ } \\
	100(x_2-kx_1-b)x_1(x_1-1)x_2(x_2-1)sin(x_1x_2),&\mbox{ in $ \Omega_2$ }
	\end{cases}
	\end{eqnarray}
	\begin{eqnarray}
	y(x_1,x_2)=
	\begin{cases}
	\frac{(x_2-kx_1-b)cos(x_1x_2)}{200}+(x_2-kx_1-b)^3, & \mbox{ in $ \Omega_1$ } \\
	\frac{(x_2-kx_1-b)cos(x_1x_2)}{2}, &\mbox{ in $ \Omega_2$ }
	\end{cases}
	\end{eqnarray}
	\begin{eqnarray}
	p(x_1,x_2)=-au(x_1,x_2).
	\end{eqnarray}
	
	\begin{table}[!hbtp]
		\centering
		\begin{tabular}{|c|c|c|c|c|c|c|}
			\hline
			N & $\frac{\|u-u_h\|_0}{\|u\|_0}$ & order & $\frac{\|y-y_h\|_0}{\|y\|_0}$ & order & $\frac{\|p-p_h\|_0}{\|p\|_0}$& order  \\
			\hline
			16 &3.9941e-02&      &8.7667e-03&      &3.9941e-02&      \\
			\hline
			32 &9.6399e-03&2.1&2.1955e-04&2.0&9.6399e-03&2.1\\
			\hline
			64 &2.3780e-03&2.0&5.5005e-04&2.0&2.3780e-03&2.0\\
			\hline
			128&5.9194e-04&2.0&1.3834e-05&2.0&5.9195e-04&2.0\\
			\hline
			256&1.4794e-04&2.0&3.5203e-06&2.0&1.4794e-04&2.0\\
			\hline
		\end{tabular}
		\caption{$L^2$ errors for Example 1.}
	\end{table}
	\begin{table}[!hbtp]
		\centering
		\begin{tabular}{|c|c|c|c|c|c|c|}
			\hline
			N & $\frac{|u-u_h|_1}{|u|_1}$ & order & $\frac{|y-y_h|_1}{|y|_1}$ & order & $\frac{|p-p_h|_1}{|p|_1}$& order  \\
			\hline
			16 &2.0695e-01&       &1.0180e-01&       &2.0695e-01&      \\
			\hline
			32 &1.0404e-01&1.0&5.0958e-02&1.0&1.0404e-01&1.0\\
			\hline
			64 &5.2064e-02&1.0&2.5486e-02&1.0&5.2064e-02&1.0\\
			\hline
			128&2.6035e-02&1.0&1.2744e-02&1.0&2.6035e-02&1.0\\
			\hline
			256&1.3017e-02&1.0 &6.3722e-03&1.0&1.3017e-02&1.0 \\
			\hline
		\end{tabular}
		\caption{$H^1$ errors for Example 1.}
	\end{table}
	
%

	\begin{remark}
		 Although the straight line interface is not strictly in $\Omega$, we choose the analytical solution both in $H^2(\Omega_1 \cup \Omega_2)$, which is in line with our theory.
	\end{remark}
	
	\noindent \textbf{Example 2. Circle Interface} 
	\quad Interface $\Gamma$ is a circle, centered at $(0,0)$ with radius $r= \frac{\sqrt{3}}{4}$. $\Omega$ is a domain $[-1,1]\times[-1,1]$. 
	$$\Omega_1=\{(x_1,x_2):x_1^2+x_2^2\le r^2 \}, \quad \Omega_2=\{(x_1,x_2):x_1^2+x_2^2>r^2 \}\cap \Omega.$$ We choose stable parameter $\lambda=5000h^{-1}$, $\alpha_1=1,$ $\alpha_2=1000$. We choose regulation parameter $a=1$, and $U_{ad}=\{u\in L^2(\Omega):-\frac12 \leq u \leq \frac12 \ \text{ a.e in} \ \Omega \}$.

	 \begin{eqnarray}
	 \varphi(x_1,x_2)=
	 \begin{cases}
	 \frac{5(x_1^2+x_2^2-r^2)(x_1^2-1)(x_2^2-1)}{\alpha_1}, & \mbox{ in $ \Omega_1$ } \\
	 \frac{5(x_1^2+x_2^2-r^2)(x_1^2-1)(x_2^2-1)}{\alpha_2},  &\mbox{ in $ \Omega_2$ }
	 \end{cases}
	 \end{eqnarray}
	 
	 \begin{eqnarray}
	 u(x_1,x_2)= \min \left \{\frac12,\max \left \{-\frac12,\varphi (x_1,x_2) \right \}  \right \},
	 \end{eqnarray}
	 
	 \begin{eqnarray}
	 y(x_1,x_2)=
	 \begin{cases}
	 \frac{(x_1^2+x_2^2)^{\frac32}}{\alpha_1}-10(x_1^2+x_2^2-r^2)sin(x_1x_2), & \mbox{ in $ \Omega_1$ } \\
	 \frac{(x_1^2+x_2^2)^{\frac32}}{\alpha_2} +( \frac{1}{\alpha_1}-\frac{1}{\alpha_2})r^3, &\mbox{ in $ \Omega_2$ }
	 \end{cases}
	 \end{eqnarray}
	 
	 \begin{eqnarray}
	 p(x_1,x_2)=-a\varphi(x_1,x_2).
	 \end{eqnarray}
	 
	 \begin{table}[!hbtp]
	 	\centering
	 	\begin{tabular}{|c|c|c|c|c|c|c|}
	 		\hline
	 		N & $\frac{\|u-u_h\|_0}{\|u\|_0}$ & order & $\frac{\|y-y_h\|_0}{\|y\|_0}$ & order & $\frac{\|p-p_h\|_0}{\|p\|_0}$& order  \\
	 		\hline
	 		16 &4.4640e-02&       &6.7792e-02&      &5.9076e-02&      \\
	 		\hline
	 		32 &1.7953e-02&1.3&2.3134e-02&1.6&1.8254e-02&1.7\\
	 		\hline
	 		64 &3.9458e-03&2.2&5.7710e-03&2.0&3.9865e-03&2.2\\
	 		\hline
	 		128&7.7806e-04&2.3&1.3023e-03&2.2&8.2130e-04&2.3\\
	 		\hline
	 		256&1.2751e-04&2.6&2.0961e-04&2.6&1.5615e-04&2.4\\
	 		\hline
	 	\end{tabular}
	 	\caption{$L^2$ errors for Example 2.}
	 \end{table}
	 
	 \begin{table}[!hbtp]
	 	\centering
	 	\begin{tabular}{|c|c|c|c|c|}
	 		\hline
	 		N & $\frac{|y-y_h|_1}{|y|_1}$ & order & $\frac{|p-p_h|_1}{|p|_1}$& order  \\
	 		\hline
	 		16 &5.0048e-01&       &2.0831e-01&\\
	 		\hline
	 		32 &2.4468e-01&1.0&1.0421e-01&1.0\\
	 		\hline
	 		64 &1.1515e-01&1.1&4.9146e-02&1.1\\
	 		\hline
	 		128&5.7116e-02&1.0&2.4365e-02&1.0\\
	 		\hline
	 		256&2.6058e-02&1.1&1.1514e-02&1.1\\
	 		\hline
	 	\end{tabular}
	 	\caption{$H^1$ errors for Example 2.}
	 \end{table}
 
 We show the Nitsche-XFEM solutions of control, state, co-state and the boundary of active set in 32 $\times$ 32 uniform triangular mesh. See Fig. \ref{figcontrol}-\ref{figactiveset}.
 \begin{figure}[!hbtp]
 	\centering
 	
 	\begin{minipage}{5cm}
 		\includegraphics[width=5cm]{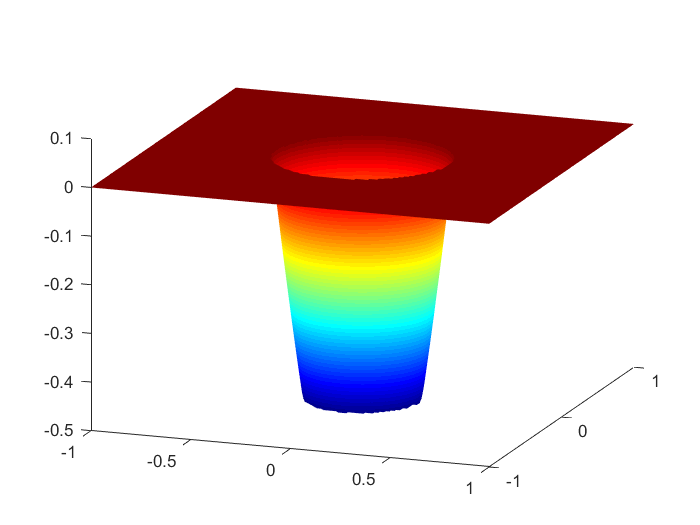}
 	\end{minipage}
 	\begin{minipage}{5cm}
 		\includegraphics[width=5cm]{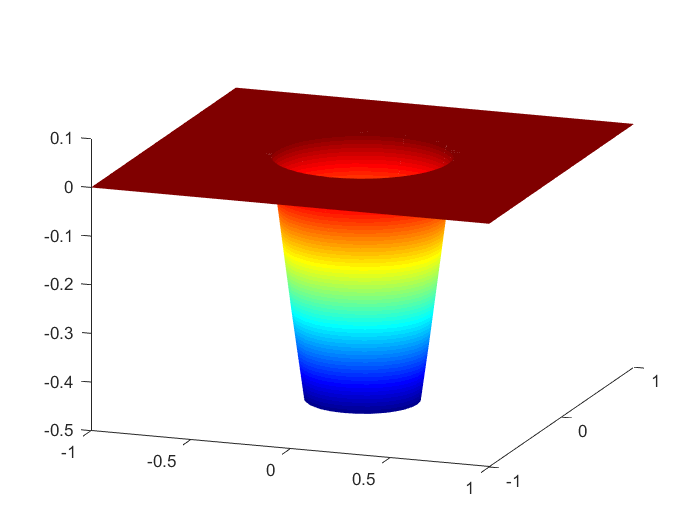}
 	\end{minipage}
 	\caption{ The exact(left) and Nitshce-XFEM(right) solutions of control for Example 2.}
 	\label{figcontrol}
 \end{figure}

 \begin{figure}[!hbtp]
	\centering
	
	\begin{minipage}{5cm}
		\includegraphics[width=5cm]{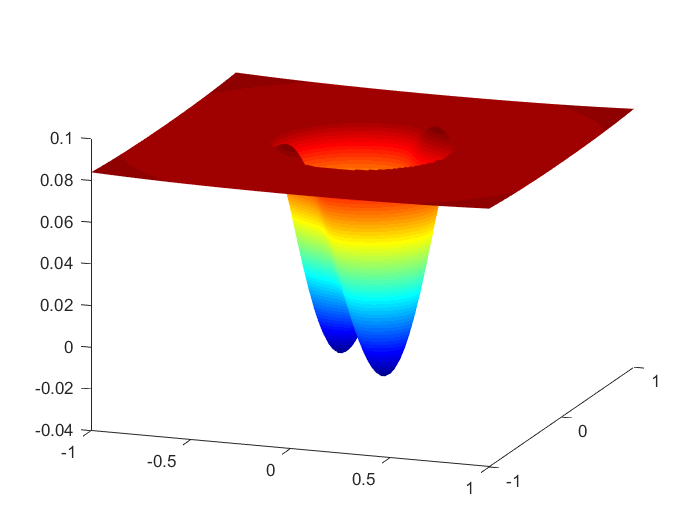}
	\end{minipage}
	\begin{minipage}{5cm}
		\includegraphics[width=5cm]{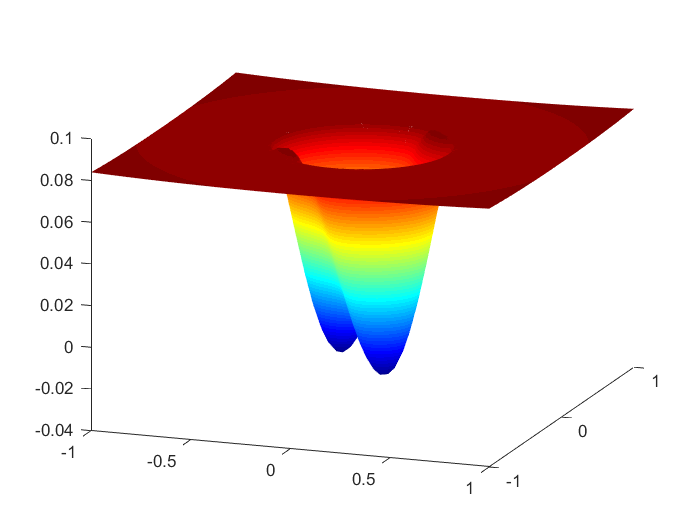}
	\end{minipage}
	\caption{ The exact(left) and Nitshce-XFEM(right) solutions of state for Example 2.}
	\label{figstate}
\end{figure}

 \begin{figure}[!hbtp]
	\centering
	\includegraphics[width=10cm]{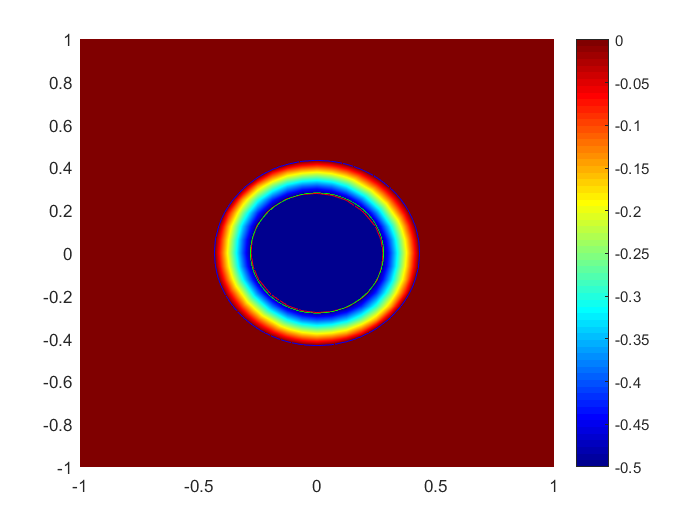}
	\caption{The Nitsche-XFEM solution of control for Example 2, green and red line are boundaries of exact and computed active set respectively, blue line is the interface $\Gamma_h$.}
		\label{figactiveset}
\end{figure}
 \begin{remark}
 	Notice that we use the piecewise straight line $\Gamma_h$  as the interface instead of $\Gamma$ in this numerical example. The approximation is more and more accurate when refine the mesh.
 \end{remark}

\noindent \textbf{Example 3. Compare with IFEM}
	\quad We also compare Nistche-XFEM with IFEM for the optimal control problem with interface. Consider the following example,
	interface $\Gamma$ is a circle, centered at $(0,0)$ with radius $r= \frac12$. $\Omega$ is a domain $[-1,1]\times[-1,1]$.
	 $$\Omega_1=\{(x_1,x_2):x_1^2+x_2^2\le r^2 \}, \quad \Omega_2=\{(x_1,x_2):x_1^2+x_2^2>r^2 \}\cap \Omega.$$ We choose stable parameter $\lambda=10000h^{-1}$, $\alpha_1=1,$ $\alpha_2=10$, regulation parameter $a=0.01$, and $U_{ad}=L^2(\Omega)$.
	
	\begin{eqnarray}
	u(x_1,x_2)=
	\begin{cases}
	\frac{5(x_1^2+x_2^2-r^2)(x_1^2-1)(x_2^2-1)}{\alpha_1},& \mbox{ in $ \Omega_1$ } \\
	\frac{5(x_1^2+x_2^2-r^2)(x_1^2-1)(x_2^2-1)}{\alpha_2},  &\mbox{ in $ \Omega_2$ }
	\end{cases}
	\end{eqnarray}

	\begin{eqnarray}
	y(x_1,x_2)=
	\begin{cases}
	\frac{(x_1^2+x_2^2)^{\frac32}}{\alpha_1}, & \mbox{ in $ \Omega_1$ } \\
	\frac{(x_1^2+x_2^2)^{\frac32}}{\alpha_2} +( \frac{1}{\alpha_1}-\frac{1}{\alpha_2})r^3, &\mbox{ in $ \Omega_2$ }
	\end{cases}
	\end{eqnarray}
	
	\begin{eqnarray}
	p(x_1,x_2)=-au(x_1,x_2).
	\end{eqnarray}
	
\begin{table}[!hbtp]
	\centering
	\begin{tabular}{|c|c|c|c|c|c|c|}
		\hline
		N & $\|u-u_h\|_0$ & order & $\|y-y_h\|_0$ & order & $\|p-p_h\|_0$& order  \\
		\hline
		16 &1.1316e-02&       &4.4535e-03&      &1.1316e-04&      \\
		\hline
		32 &3.0688e-03&1.88&1.1883e-03&1.91&3.0688e-05&1.88\\
		\hline
		64 &7.5979e-04&2.01&3.1686e-04&1.91&7.5979e-06&2.01\\
		\hline
		128&1.8516e-04&2.04&7.6393e-05&2.05&1.8516e-06&2.04\\
		\hline
		256&4.2966e-05&2.11&1.8584e-05&2.04&4.2966e-07&2.11\\
		\hline
	\end{tabular}
	\caption{$L^2$ errors of Nitsche-XFEM for Example 3.}
\end{table}

\begin{table}[!hbtp]
	\centering
	\begin{tabular}{|c|c|c|c|c|c|c|}
		\hline
		N & $|u-u_h|_1$ & order & $|y-y_h|_1$ & order & $|p-p_h|_1$& order  \\
		\hline
		16 &1.1407e-01&       &1.1311e-01&      &1.1401e-03&      \\
		\hline
		32 &5.7015e-02&1.00&5.8796e-02&0.94&5.6926e-04&1.00\\
		\hline
		64 &2.7869e-02&1.03&2.9448e-02&1.00&2.7932e-04&1.03\\
		\hline
		128&1.3830e-02&1.01&1.4800e-02&0.99&1.3852e-04&1.01\\
		\hline
		256&6.8465e-03&1.01&7.3659e-03&1.00&6.8465e-05&1.01\\
		\hline
	\end{tabular}
	\caption{$H^1$ errors of Nitsche-XFEM for Example 3.}
\end{table}

\begin{table}[!hbtp]
	\centering
	\begin{tabular}{|c|c|c|c|c|c|c|}
		\hline
		N & $\|u-u_h\|_0$ & order & $\|y-y_h\|_0$ & order & $\|p-p_h\|_0$& order  \\
		\hline
		16 &1.1889e-02&       &4.6400e-03&      &1.1889e-04&      \\
		\hline
		32 &3.1406e-03&1.92&1.2288e-03&1.91&3.1406e-05&1.92\\
		\hline
		64 &7.0663e-04&2.15&3.1438e-04&1.96&7.0663e-06&2.15\\
		\hline
		128&1.6334e-04&2.11&8.1934e-05&1.93&1.6334e-06&2.11\\
		\hline
		256&3.5894e-05&2.18&2.1650e-05&1.92&3.5894e-07&2.18\\
		\hline
	\end{tabular}
	\caption{$L^2$ errors of IFEM for Example 3. (From $\cite{Zhang2015Immersed}$)}
\end{table}

\begin{table}[!hbtp]
	\centering
	\begin{tabular}{|c|c|c|c|c|c|c|}
		\hline
		N & $|u-u_h|_1$ & order & $|y-y_h|_1$ & order & $|p-p_h|_1$& order  \\
		\hline
		16 &1.0665e-01&       &1.0778e-01&      &1.0665e-03&      \\
		\hline
		32 &5.2602e-02&1.01&5.5660e-02&0.95&5.2602e-04&1.01\\
		\hline
		64 &2.7054e-02&0.95&2.9084e-02&0.93&2.7054e-04&0.95\\
		\hline
		128&1.4028e-02&0.94&1.5047e-02&0.95&1.4028e-04&0.94\\
		\hline
		256&7.4170e-03&0.91&7.9081e-03&0.92&7.4170e-05&0.91\\
		\hline
	\end{tabular}
	\caption{$H^1$ errors of IFEM for Example 3. (From $\cite{Zhang2015Immersed}$)}
\end{table}
In this example, the convergence order of Nitsche-XFEM is always full when the mesh is fine enough, our other numerical examples also show the fact. While for IFEM, the convergence order of $H^1$ semi-norm reduced from 1.01 to 0.91 when the mesh is refined. 

	\section{Implementation aspects}
	
	In this section, we provide some details in our numerical experiments.
	If there is no constraint on the control i.e. $U_{ad}=L^2(\Omega)$. We solve the optimal control problem with state equation \eqref{eqstrongstate2}, by solve the following linear system.
	\begin{align}
	&a_h(y_h,v_h)=l_h(v_h)+(f,v_h),\quad \forall v_h \in V_h. \label{eqi1} \\
		&a_h(p_h,v_h)=(y_h-y_d,v_h), \quad \forall v_h \in V_h. \label{eqi2}\\
		&p_h=-a u_h, \label{eqi3} \\
		&y_h=b_h, \ p_h=0, \quad on \ \partial \Omega. \label{eqi4} 
	\end{align}
Where $b_h$ is the discrete boundary condition. 

In order to illustrate how to solve the linear system, let us introduce some notations about the matrix and vector.  The basis function of Nitsche-XFEM denote by $\phi_i$. 
\begin{align*}
	&A(i,j)=a_h(\phi_i,\phi_j),\\
	&M(i,j)=(\phi_i,\phi_j),\\
	&F_1(j)=(k_2 g,\phi_{j,1})_{\Gamma_h}+(k_1 g,\phi_{j,2})_{\Gamma_h}+(f,\phi_j),\\
	&F_2(j)=-(y_d,\phi_j) .
\end{align*}
Where  $i,j=1,2,3...,S$,  $S$ is the number of total degree of freedoms, $A$ is the stiffness matrix, $M$ is the mass matrix, $F_1$ and $F_2$ are column vectors.
$(U_h,Y_h,P_h)$ are column vectors consisting of corresponding degree of freedoms $(u_h,y_h,p_h)$.
With these notations, the linear system \eqref{eqi1}-\eqref{eqi4} rewritten to,
\begin{align*}
	AY_h&=MU_h+F_1 ,\\
	AP_h&=MY_h+F_2, \\
	P_h&=-aU_h .
\end{align*}
In matrix form,
\begin{center}
$\begin{bmatrix}
-aM & \textbf{0} &-M\\
\textbf{0} & -M & A \\
-M & A& 	\textbf{0}
\end{bmatrix}$
$
\begin{bmatrix}
U_h \\
Y_h \\
P_h
\end{bmatrix}
$
$ =
\begin{bmatrix}
\textbf{0} \\
F_2 \\
F_1
\end{bmatrix}$ .
\end{center}

 The last is to apply boundary condition \eqref{eqi4}
to this system and solve it. Note we have to use special numerical integration scheme to get the stiffness matrix and mass matrix, because $\alpha$ and some basis functions are discontinuous across the interface. We simple divided the interface element into several sub-triangles and integrate these discontinuous functions on sub-triangles, see Fig. \ref{fgin1}.
\begin{figure}[htbp] 

	\centering
	\includegraphics[width=10cm]{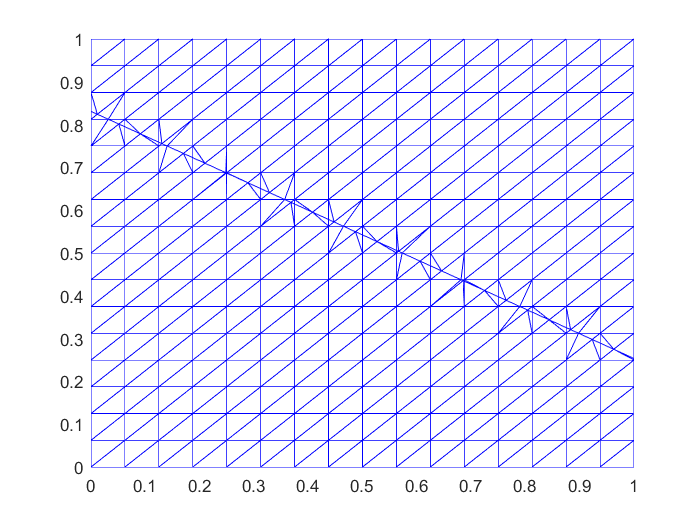}
		\caption{Numerical integration mesh for Example 1}
			\label{fgin1}
\end{figure}

If $u$ is constrained, for example $U_{ad}=\{u\in L^2(\Omega): -\frac12 \leq u \leq \frac12 \ \text{a.e. in} \  \Omega \}$. Then we can not use the above algorithm to solve the optimal control problem. The system became to a non-linear and non-smooth system.
\begin{align}
&a_h(y_h,v_h)=l_h(v_h)+(f,v_h), \quad \forall v_h \in V_h,\label{eqii1}\\
&a_h(p_h,v_h)=(y_h-y_d,v_h), \quad \forall v_h \in V_h,\label{eqii2}\\
&u_h=\min \left \{\frac12,\max \left \{-\frac12,-\frac{p_h}{a} \right \} \right \}, \label{eqii3}\\
&y_h=b_h,\ \ p_h=0,\quad \text{on} \ \partial \Omega. \label{eqii4}
\end{align}

Because for any $p_h \in V_h$, $u_h=\min \left \{\frac12,\max \left \{-\frac12,-\frac{p_h}{a} \right \} \right \}$ may not in $V_h$. It makes the system difficult (or expensive) to solve in some cases, for example it is expensive when use high order finite element method to solve the system $\cite{Sevilla2010Polynomial}$. While the solution of Nitsche-XFEM is piecewise linear, we can easily apply fixed-point iteration or semi-smooth newton method  $\cite{Hinze2012A}$ to solve the above system \eqref{eqii1}-\eqref{eqii4}. We give a description of fixed-point iteration.
\paragraph{Algorithm}
\begin{enumerate}
	\item Initialize $u^i_h=u^0$; 
	\item Compute $y^i_h\in V_h$ by $a_h(y_h^i,w_h)=l^i_h(v_h),\forall v_h\in V_h$; \label{computeyhi}
	\item Compute $p_h^i\in V_h$ by $a_h(v_h,p_h^i)=(y_h^i-y_d,v_h),\forall v_h\in V_h$;
	\item Set $u_h^{i+1}=\max\{u_a,\min\{-\frac{p_h}{a},u_b\}\}$;
	\item if $|u_h^{i+1}-u_h^i|<\text{Tol}$ or $i+1>\text{MaxIte}$, then output $u_h=u_h^{i+1}$, else $i=i+1$, and go back to Step \ref{computeyhi}.
\end{enumerate}
Where $u^0$ is an initial value, Tol is the tolerance, MaxIte is the maximal iteration number and $l^i_h(v_h)=(u_h^i,v_h)+(f,v_h)+(k_2g,v_{1,h})_{\Gamma_h}+(k_1g,v_{2,h})_{\Gamma_h}$. This algorithm is convergent when the regularity parameter a is large enough (cf. \cite{M2008}). If the regularity parameter $a$ is small, we recommend semi-smooth newton method.

In step \ref{computeyhi}, we also need special numerical integration scheme when computing $(u_h^i,v_h)$. Because $u_h^i 
\in U_{ad}$ has constraints, see Fig. \ref{meshcircle}. It should be noticed that the numerical integration mesh is used only for the numerical integration.

	\begin{figure}[htbp]

	\centering
		\includegraphics[width=10cm]{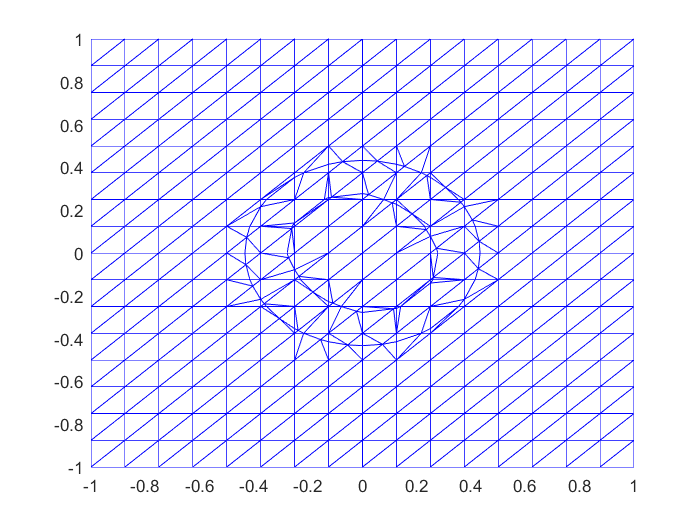}
	\caption{Numerical integration mesh for Example 2.}
		\label{meshcircle}
\end{figure}

\section{Conclusion}
In this paper, a numerical method has developed for optimal control problem govern by elliptic PDEs with interfaces. Optimal error estimations are derived for the state, co-state and control in an unfitted mesh.  Our method is suitable for the model with non-homogeneous interface condition. Numerical results show our method is more stable for some problems compare with IFEM, stable means the convergence order do not reduce, when refine the mesh. There are many different XFEMs for different problems, but they are not used to solve the optimal control problem. We want to consider other XFEM for other optimal control problem, for example, XFEM for optimal control problem govern by elliptic PDEs in a non-convex domain.


\end{document}